\newcommand{\C}{\mathbb{C}}
\newcommand{\R}{\mathbb{R}}
\newcommand{\Lbb}{\mathbb{L}}
\newcommand{\Z}{\mathbb{Z}}
\newcommand{\Q}{\mathbb{Q}}
\newcommand{\PP}{\mathcal{P}}
\renewcommand{\to}{\longrightarrow}
\newtheorem{Theorem}{Theorem}[section]
\newtheorem{Definition}[Theorem]{Definition}
\newtheorem{Lemma}[Theorem]{Lemma}
\newtheorem{Proposition}[Theorem]{Proposition}
\newtheorem{Corollary}[Theorem]{Corollary}
\newtheorem{Remark}[Theorem]{Remark}
\newtheorem{Example}[Theorem]{Example}
\newtheorem{Notation}[Theorem]{Notation }
\renewcommand{\ni}[0]{\noindent}
\newcommand{\Tbf}[0]{\mathbf{T}}
\DeclareMathOperator{\codim}{codim}
\DeclareMathOperator{\Cone}{Cone}
\newcommand{\LL}{\mathcal{L}}
\newcommand{\ch}{\mathsf{ch}}
\DeclareMathOperator{\rank}{rank}
\renewcommand{\AA}{\mathcal{A}}
\newcommand{\FF}{\mathcal{F}}
\renewcommand{\SS}{\mathcal{S}}
\renewcommand{\LL}{\mathcal{L}}
\newcommand{\MM}{\mathcal{M}}
\renewcommand{\AA}{\mathcal{A}}
\newcommand{\Cal}{\mathcal }
\newcommand{\pol}{\lhd} 
\newcommand{\pog}{\rhd} 
\newcommand{\fl}{\prec} 
\newcommand{\fg}{\succ} 
\newcommand{\op}{\textup{op}}
\newcommand{\nbc}{\textit{\textbf{nbc}}}
\theoremstyle{remark}
\theoremstyle{definition}
\newcommand{\totl}{\sqsubset}
  \definecolor{colore}{cmyk}{0,1,0.6,0}
  \definecolor{coloregen}{cmyk}{0.7,0,1,0}
  \definecolor{coloresimo}{cmyk}{1,0.6,0,0}
  \definecolor{colore}{cmyk}{0,0,0,1}
  \definecolor{coloregen}{cmyk}{0,0,0,1}
  \definecolor{coloresimo}{cmyk}{0,0,0,1}
\begin{document}

\title{The $\nbc$ minimal complex of supersolvable arrangements}

\begin{abstract}
In this paper we give a very natural description of the bijections between the minimal CW-complex homotopy equivalent to the complement of a supersolvable arrangement $\AA$, the $\nbc$ basis of the Orlik-Solomon algebra associated to $\AA$ and the set of chambers of $\AA$. We use these bijections to get results on the first (co)homology group of the Milnor fiber of $\AA$ and to describe a bijection between the symmetric group and the $\nbc$ basis of the braid arrangement.

\end{abstract}

\author{Simona Settepanella}
\address{Simona Settepanella, Department of Mathematics, Hokkaido University, Kita 10, Nishi 8, Kita-Ku, Sapporo 060-0810, Japan.}
\email{settepanella@math.sci.hokudai.ac.jp}

\author{Michele Torielli}
\address{Michele Torielli, Department of Mathematics ``Giuseppe Peano'', Turin University,  Via Carlo Alberto 10, 10123 Turin, Italy.}
\email{michele.torielli@unito.it}


\date{\today}
\maketitle

\tableofcontents

\section{Introduction}

The theory of arrangements of hyperplanes is a subject intensively studied during the last 60 years. The main topic of this theory is the study of the complement of a set of hyperplanes in the space. It started in 1889  when  Roberts gave a formula to count how many open disconnected regions there are when we cut the plane by removing a set of lines (see \cite{orlterao} for a detailed reference). A direct generalization of this problem, the removal of hyperplanes in higher dimensional spaces, stayed  unsolved until 1975, when Zaslavsky gave a general counting formula in a paper on the AMS Memoir (see \cite{Zaslavsky1975Memoir}). Those open regions are called \textit{chambers}. In 1980 Orlik and Solomon introduced the well known Orlik-Solomon algebra (see \cite{orlik1980combinatorics}) that is completely described by combinatorial methods and compute the cohomology group with integer coefficients of the complement of a complex hyperplanes arrangement. 

The Orlik-Solomon algebra is a graded algebra with a basis $\nbc$ called \textit{non broken circuit} basis. It turns out that, when considering complexified real arrangements, i.e. the case in which the hyperplanes have real defining equations, the total number of elements in a non broken circuit basis equals the number of chambers of the underling real arrangement. The correspondence between those two objects has been studied by many authors interested in the combinatorial aspects of the theory of arrangements of hyperplanes. For example, Barcelo and Gupil (see \cite{BarceloGupil}) studied the case of arrangements coming from reflection groups and,  Gioan and Las Vergnas, in \cite{GioLa}, studied the general case.

More recently, Dimca and Papadima (see \cite{dimca2003hypersurface}), and Randell (see \cite{randell}) proved that the complement of a complex hyperplane arrangement is a minimal space, i.e. it has the homotopy type of a CW-complex with exactly as many $k$-cells as the $k$-th Betti number $b_k$ or, in other words, as many $k$-cells as the cardinality of $\nbc_k$, i.e. of the homogeneous elements of degree $k$ in the non broken circuits  basis.
In 2007, Yoshinaga (see \cite{yoshinaga2007hyperplane}), Salvetti and the first author (see \cite{salvettisette}) gave a description of this minimal complex in the case of complexified real arrangements.  Then the question arises on existence of ``natural" bijections between the chambers of the real arrangement, the minimal CW-complex of the complexified one and the $\nbc$ basis. This question has been addressed by Delucchi in \cite{Del2008} and Yoshinaga in  \cite{yoshinaga2009chamber}.

In this paper, we study this problem in the special case of supersolvable arrangements defining a very natural and handy description of the bijection between minimal complex, $\nbc$ basis and chambers. This bijection, defined in Section  \ref{sec:main}, only involves elements of the intersection poset of the hyperplane arrangement endowed with an order $\pol$ and the hyperplanes separating chambers and a previously fixed chamber $C_0$.

The map $f$ defined in equation (\ref{df:nabla}), Section \ref{sec:main}, turns out to have applications to the study of first (co)homology group of the Milnor fiber of supersolvable arrangements via the study of the first (co)homology group  with local coefficients of their complement. The (co)homolgy of the Milnor fiber associated to an arrangement has been studied by several authors (see, for instance, \cite{budur2009first}, \cite{cohen1995milnor}, \cite{denham2012multinets}, \cite{libgober2010combinatorial}, \cite{yoshinaga2013milnor} and the survey \cite{Suciu:2013fk}) as the first (co)homology group  with local coefficients of the complement of an arrangement (see, for instance, \cite{libgober2000cohomology}, \cite{dimca2009admissible}, \cite{schechtman1994local} and \cite{ontheadmisscertlocsyst}).

By means of the map $f$ we construct a filtration of the $\nbc$ minimal complex of a supersolvable arrangement that allows to apply similar method to the one used in \cite{DeProSal} and developed in \cite{settepanella2009cohomology}. In particular, if $F(\AA)$ is the Milnor fiber of $\AA$,   then the following statement holds.
  
\begin{Theorem}\label{th:bounintro} Let $\AA =\AA_d \supset \ldots \supset \AA_1$ be a supersolvable arrangement in $\R^d$. If it exists an index  $1 \leq j \leq d$ such that the cardinalities of $\AA_{j-1}$ and $\AA_j$ are coprime and all hyperplanes in $\AA_j \setminus \AA_{j-1}$ intersect generically the hyperplane in $\AA_1$, then  $H_1(F(\AA_j),\Q)\simeq \Q^{b_j}$, $b_j \leq \sharp (\AA_j \setminus \AA_{j-1})$.
\end{Theorem}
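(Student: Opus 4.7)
The plan is to reduce the computation of $H_1(F(\AA_j),\Q)$ to the study of cohomology of the complement $M(\AA_j)$ with coefficients in rank-one local systems, and then to exploit the filtration of the minimal $\nbc$-complex induced by the map $f$ of Section \ref{sec:main}. Via the Milnor fibration one has, over $\C$, the eigenspace decomposition
\[
H^1(F(\AA_j),\C)\;\simeq\;\bigoplus_{\lambda^{n}=1} H^1(M(\AA_j),\LL_\lambda),\qquad n=\sharp\AA_j,
\]
where $\LL_\lambda$ is the rank-one local system with monodromy $\lambda$ around every hyperplane. The $\lambda=1$ summand contributes the Betti numbers of $M(\AA_j)$, while the non-trivial $\lambda$ contribute the "interesting" part of the Milnor fiber cohomology; grouping the latter into Galois orbits provides the $\Q$-structure.

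Next, I would compute each $H^1(M(\AA_j),\LL_\lambda)$ cellularly via the minimal $\nbc$-complex. The bijection $f$ organises the cells along the supersolvable chain $\AA_j\supset\AA_{j-1}$, splitting the $1$-cells into those indexed by $\AA_{j-1}$ (reproducing the twisted minimal complex of $\AA_{j-1}$) and a "new" block of cardinality $\sharp(\AA_j\setminus\AA_{j-1})$, whose boundary maps are expressible explicitly through the separating hyperplane data of Section \ref{sec:main}. Applying the filtration-and-exact-sequence strategy of \cite{DeProSal,settepanella2009cohomology} reduces the problem to (i) a vanishing/inductive statement for the twisted cohomology of $\AA_{j-1}$ and (ii) a bound on the cokernel of the twisted boundary of the new $1$-cells.

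Item (i) would follow from the coprimality hypothesis $\gcd(\sharp\AA_{j-1},\sharp\AA_j)=1$: for $\lambda^{\sharp\AA_j}=1$ with $\lambda\ne 1$ the restriction $\LL_\lambda|_{M(\AA_{j-1})}$ remains non-trivial, so the inductive vanishing argument of \cite{settepanella2009cohomology} applies. Item (ii) would follow from the genericity of $\AA_j\setminus\AA_{j-1}$ with respect to the hyperplane of $\AA_1$: in the filtration induced by $f$ this forces the relevant blocks of the twisted boundary to be diagonal with entries of the form $1-\lambda^k$, which the coprimality prevents from vanishing. Summing the resulting bounds over Galois orbits and combining with the trivial eigenspace yields $H_1(F(\AA_j),\Q)\simeq \Q^{b_j}$ with $b_j\le\sharp(\AA_j\setminus\AA_{j-1})$.

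The main obstacle I foresee is item (ii): explicitly writing down the twisted boundary on the "new" column of the minimal complex in terms of the combinatorics furnished by $f$, and checking that the genericity and coprimality assumptions together make it injective on the subspace controlling $H^1$. This is precisely where the concrete description of $f$ via separating hyperplanes in Section \ref{sec:main} is designed to play its decisive role, essentially by letting one read off the twisted incidence coefficients directly from the chamber/hyperplane combinatorics of $\AA_j$.
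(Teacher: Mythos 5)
Your overall skeleton --- reduce to local--system homology of the complement, filter the minimal complex along the supersolvable chain, use coprimality to split the resulting long exact sequence, and use genericity to control the ``new'' block of the boundary --- is the paper's strategy; the only cosmetic difference is that the paper works once and for all over $\Q[t,t^{-1}]$ with the single local system $\Lbb_{t,\Q}$ (the Milnor fiber being the infinite cyclic cover), rather than eigenvalue by eigenvalue with a Galois--orbit reassembly. The genuine gap is precisely the step you flag as the ``main obstacle'': you never produce the control of the twisted boundary on the new cells, and the mechanism you sketch for it is not the one that works. The paper's device is the augmented inclusion $i[1]$ of \eqref{eq:mappaau}, mapping $\sharp(\AA_j\setminus\AA_{j-1})$ shifted copies of the two--term complex $\mathcal{C}(\SS(\AA_1))$ into the quotient complex $F^1(\SS(\AA_j))$ of \eqref{eq:succ1} by sending the $0$-cell to $(H)$ and the $1$-cell to $(H_1,H)$. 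Genericity of $H\in\AA_j\setminus\AA_{j-1}$ with respect to $H_1$ is used exactly to make $i[1]$ a chain map (Theorem \ref{thm:bprin}): by the explicit formula \eqref{eq:boundary} for $\partial_2$ on the line arrangement $\AA\cap V_2$ one gets $\partial_2((H_1,H))=(1-t)(H)$, matching $\partial_1$ in $\mathcal{C}(\SS(\AA_1))$. Since the further quotient of $F^1(\SS(\AA_j))$ by the image of $i[1]$ has no $1$-cells, the tail of the associated long exact sequence shows $H_1(F^1(\SS(\AA_j)),\Lbb_{t,\Q})$ is a quotient of $\bigoplus_{H}H_0(\mathcal{C}(\SS(\AA_1)),\Lbb_{t,\Q})\simeq\Q^{\sharp(\AA_j\setminus\AA_{j-1})}$, on which $t$ acts trivially; coprimality then splits the long exact sequence of \eqref{eq:succ1} and embeds $H_1(\mathcal{C}(\SS(\AA_j)),\Lbb_{t,\Q})$ into this group.

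Two consequences of this that your sketch gets wrong: coprimality plays no role in the ``new block'' (the relevant entries are all $1-t$, not $1-\lambda^k$, and they are dealt with by quotienting along $i[1]$, not by invertibility at $\lambda\neq1$); and the conclusion $H_1(F(\AA_j),\Q)\simeq\Q^{b_j}$ with trivial $t$-action means the monodromy is trivial, so the non-trivial eigenvalues and their Galois orbits contribute nothing --- contrary to what your eigenspace decomposition seems to anticipate. Without an explicit replacement for Theorem \ref{thm:bprin}, your item (ii) remains an unproved assertion rather than a proof step.
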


The Theorem above proves that, under certain conditions on a supersolvable arrangement $\AA$, we get the triviality of the monodromy action on $H_1(F(\AA),\Q)$. A natural question is under which conditions Theorem \ref{th:bounintro} can be extended to higher (co)homology groups and which informations we can get on the monodromy action on $H_1(F(\AA),\Q)$ ( and hence on $H_1(F(\AA),\C)$ ) using the filtration described in Section \ref{sec:mil}.

Moreover, if  $\AA=\{H_{ij}=\{x_i=x_j\},1\le i<j\le n+1\}$ is the braid arrangement, the map $f$ turns out to be very useful in order to give, for the first time, a direct description of the bijection between the symmetric group and the non broken circuit basis $\nbc$ associated to $\AA$. In 1995 Barcelo and Goupil (see \cite{BarceloGupil}) proved that if $\AA(W)$ is the reflection arrangement associated to the Coxeter group $(W,S)$, $\nbc (\AA(W))$ is its non broken circuit basis and $H_{r} \in \AA(W)$ is the hyperplane defined by the reflection $r \in W$, the map
\begin{equation*}
\begin{split}
g\colon  \nbc (\AA(W)) &\to W\\
 (H_{r_1}, \dots, H_{r_k}) &\longmapsto w=r_1 \dots r_k
\end{split}
\end{equation*}
is a bijection. However, they could not provide a direct description of the inverse map $g^{-1}$ because it is related to the word problem in the group $W$. Indeed the expression of $w \in W$ as a product of reflections is not unique. In this paper we provide a description of $g^{-1}$ in case of the symmetric group on $n+1$ elements $A_n$.  Since the map in \cite{BarceloGupil} is for any reflection group it is a natural question whether the construction in this paper can be extended also to other reflection groups (even the non-supersolvable ones).

More in detail, in Section \ref{sec:pre} we recall the definitions of the minimal Salvetti's complex associated to complexified real arrangements and of the $\nbc$ basis for real supersolvable arrangements. In Section \ref{sec:main} we introduce a relation between the $\nbc$ basis and the minimal complex of a complexified real supersolvable arrangement $\AA$ and prove that this relation is, in fact, a bijection. In Section \ref{sec:mil} and in Section \ref{sec:braid} we consider two applications of the map described in Section \ref{sec:main}. In Section \ref{sec:mil} we use the map to get result on the first (co)homology group of the Milnor fiber of the arrangement $\AA$, while in Section \ref{sec:braid} we give a description of this map in the special case of the braid arrangement providing a bijection between elements of the $\nbc$ basis and the permutations of the symmetric group which are in one to one correspondence with chambers of the braid arrangement.

\section{Preliminaries}
\label{sec:pre}

Let $\AA$ be an essential affine hyperplane arrangement in $\R^d$, i.e., a set of affine real hyperplanes whose minimal nonempty intersections are points. Let $\FF=\FF(\AA)$ denote the set of closed strata of the induced stratification of $\R^d$. It is customary to endow $\FF$ with a partial ordering $\prec$ given by reverse inclusion of topological closures. The elements of $\FF$ are called {\em faces} of the arrangement. The poset $\FF$ is ranked by the {\em codimension} of the faces. The connected components of $\mathbb{R}^d\setminus \AA$, corresponding to  elements of $\FF$ of maximal dimension, are called {\em chambers}. 
For any $F\in\FF$, denote by $|F|$ the affine subspace spanned by $F$, called the {\em support} of $F$, and set 
$$\AA_F\ :=\ \{ H \in \AA~|~ F\subset H\}.$$

In \cite{salvetti}, Salvetti constructed a regular CW-complex $\SS(\AA)$ (denoted just by $\SS$ if no confusion can arise) that is a deformation retract of the complement
$$\MM(\AA):=\ \C^d \setminus \bigcup_{H\in A}\ H_{\C},$$
 of the complexification of $\AA$.

The  $k$-cells of $\SS$ bijectively correspond to pairs $[C\prec F]$, where $F\in\mathcal{F}$, $\codim(F)=k$ and $C$ is a chamber. A cell $[D\prec G]$ is in the boundary of $[C\prec F]$ if $G \fl F$ and the chambers $D$, $C$ are contained in the same chamber of $\AA_{F}$.

\subsection{Minimal Salvetti's complex}\label{subsec:MC} In \cite{salvettisette}, Salvetti and the first author constructed a minimal complex homotopy equivalent to the complement $\MM(\AA)$ of a complexified real arrangement $\AA$. The main ingredients of this construction are the Forman's Discrete Morse Theory and the Salvetti's complex. They explicitly constructed a combinatorial gradient vector field over $\SS$ whose critical cells correspond to the cells of the minimal complex. This vector field is related to a given system of polar coordinates in $\R^d$ which is \textit{generic} with respect to the arrangement $\AA$. This generic system of coordinates allow them to give a total order $\pol$ on the faces $\FF$ that is the key to describe both, gradient vector field and critical cells. In this paper we are mainly interested in the latter.

More in detail, let $\{V_i\}_{i=0,\ldots,d}$ be a flag of affine subspaces in general position in $\R^d$, such that $\dim(V_i)=i$ for every $i=0,\ldots,d$ and such that the polar coordinates $(\rho,\theta_1,\ldots,\theta_{d-1})$ of every point in a bounded face of $\AA$ satisfy $\rho>0$ and $0< \theta_i < \pi/2$, for every $i=1,\ldots,d-1$ (see \cite[Section 4.2]{salvettisette} for the precise description). Every face $p$ is labeled by the coordinates of the point in its closure that has, lexicographically, least polar coordinates. The {\em polar ordering} associated to a generic flag is the total order $\pol$ on $\FF$ obtained by ordering the faces lexicographically according to their labels. This extends the order in which $V_{d-1}$ intersects the faces in $V_d$ while rotating around $V_{d-2}$. If two faces share the same label,  thus the same minimal point $r$ , the ordering is determined by the general flag induced on the copy of $V_{d-1}$ that is rotated `just past $p$' and the ordering it generates by induction on the dimension (see \cite[Definition 4.7]{salvettisette}).  The $k$-cells of the minimal complex will be the $k$-critical cells (see \cite[Theorem 6]{salvettisette})

$$\textup{Crit}_k(\SS) = \bigg\{[C\prec F] \left\vert\begin{array}{l} \codim(F)=k,\, F\cap V_k \neq \emptyset,\\
G\pol F \textrm{ for all }G \textrm{ with } C\fl G \precneqq F\end{array}\right.\bigg\}$$
(equivalently, $F\cap V_k$ is the maximum in polar ordering among all facets of $C\cap V_k$).

\begin{Notation}We will denote by $\ch(\AA)$ the set of chambers of $\AA$ and by $\textup{Crit}(\SS)=\cup_k\textup{Crit}_k(\SS)$.
\end{Notation}

\begin{Notation}\label{notaz:critfac} Let $p$ be a $k$-face in $\FF$ that intersects $V_k$. The intersection is a point $\overline{p}$ in $V_k$ and we will denote by $V^+_{k-1}(p)$ the copy of $V_{k-1}$ that is rotated in $V_k$ around $V_{k-2}$ ``just past $\overline{p}$" and $V^-_{k-1}(p)$ the copy of $V_{k-1}$ that is rotated in $V_k$ around $V_{k-2}$ ``just before $\overline{p}$" . From now on, when a generic flag $\{V_i\}_{i=0,\ldots,d}$ is given, we will use letter $p$ to denote faces that intersect a $V_k$ for some $k$ and by $\PP(\AA)=\cup_{k=0}^{d} \PP^k(\AA)$ the union of sets
$$
\PP^k(\AA):=\{p \in \FF^k \mid p \cap V_k\neq \emptyset \}
$$
of $k$-codimensional critical faces. Following notations in \cite{salvettisette}, given a chamber $C$ and a facet $p$, we will denote by $C.p$ the unique chamber containing $p$ and lying in the same chamber as $C$ in $\Cal A_{p}$. 
\end{Notation}

Let us remark that, by construction of the polar ordering, all faces $F \prec p$ such that $F \cap V^+_{k-1}(p) \neq \emptyset$ and $F \cap V^-_{k-1}(p) = \emptyset$ verify $F \pog p$.

Moreover, given a chamber $C \in \ch(\AA)$ and a critical face $p \in \PP(\AA)$, the $k$-cell $[C \prec p]$ is critical if and only if $C \cap V^-_{k-1}(p)$ is a bounded chamber in $V^-_{k-1}(p)$. 
In the rest of the paper we will often deal with chambers $C$ such that $C \cap V^-_{k-1}(p)$ (respectively $C \cap V^+_{k-1}(p)$) is bounded in $V^-_{k-1}(p)$ (respectively $V^+_{k-1}(p)$). In this case, for sake of simplicity, we will say that $C$ is bounded in $V^-_{k-1}(p)$ (respectively $V^+_{k-1}(p)$). 
The following remark is straightforward. 

\begin{Remark}\label{rem:rem2} If a chamber $C$ is bounded in $V_k$, then it is bounded in all $V^+_{k-1}(p)$ (respectively $V^-_{k-1}(p)$), $p \in \PP^k(\AA)$, such that $C \cap V^+_{k-1}(p) \neq \emptyset$ (resp. $C \cap V^-_{k-1}(p) \neq \emptyset$). Moreover, if $C$ is bounded in $V^+_{k-1}(p)$ then $C \cap V^-_{k-1}(p) = \emptyset$ and viceversa as $V^+_{k-1}(p)$ and $V^-_{k-1}(p)$ obviously intersect two opposite cones of $\AA_p$. 
\end{Remark}

The argument in Remark \ref{rem:rem2} holds in the more general setting in which $C$ is bounded in the space $V^+_{k}(p^{k+1})$ ( $V^-_{k}(p^{k+1})$), $p^{k+1} \in \PP^{k+1}(\AA)$.

The following Lemma will be useful to prove our main result. 

\begin{Lemma}\label{lem:lem imp} Let $C \prec p^{k+1} \in \PP^{k+1}(\AA)$ be a bounded chamber in $V^-_{k}(p^{k+1})$ and $F=\min_{\pol}\{F^k \in \FF^k \mid F \cap  V^-_k(p^{k+1}) \neq \emptyset, F \prec p^{k+1}\}$. If $p \in \PP^k(\AA)$ is the only $k$-critical face with same support of $F$, then $C.p \pog p$.
\end{Lemma}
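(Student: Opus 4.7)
The strategy is to invoke the principle stated right after Remark~\ref{rem:rem2}: every face $G \prec p$ with $G \cap V^+_{k-1}(p) \neq \emptyset$ and $G \cap V^-_{k-1}(p) = \emptyset$ satisfies $G \pog p$. Since $p \subset \overline{C.p}$, we have $C.p \prec p$ in the face poset, so the task reduces to verifying these two intersection conditions with $G = C.p$.

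First, I would analyze the relative position of $F$, $p$ and $p^{k+1}$ on the common support $|F|=|p|$. Because $F \prec p^{k+1}$, the face $p^{k+1}$ is a facet of $F$ inside $|p|$. The hypothesis that $p$ is the unique critical $k$-face on $|p|$ means $\overline{p} := p \cap V_k$ is the only point of $|p| \cap V_k$ lying in an element of the trace arrangement on $|p|$ that meets $V_k$. As $V_k$ is rotated around $V_{k-1}$ inside $V_{k+1}$ toward $V^-_k(p^{k+1})$, the intersection point travels on $|p|$ and, by the definition of $V^-_k(p^{k+1})$, must cross $p^{k+1}$ before reaching $F$ (unless $F=p$); hence $p$ and $F$ lie on opposite sides of $p^{k+1}$ within $|p|$.

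Second, $C \prec p^{k+1}$ implies $p^{k+1} \subset \overline{C}$, hence $F \subset \overline{C}$. Combined with the boundedness of $C \cap V^-_k(p^{k+1})$ and the polar-minimality of $F$ among facets of $p^{k+1}$ meeting $V^-_k(p^{k+1})$, this identifies uniquely the side of $|p|$ on which $C$ (and hence $C.p$, via its definition through $\AA_p$) lies. Under the generic flag assumption, this side coincides with the half-space in $V_k$ reached by rotating $V_{k-1}$ forward past $\overline{p}$, yielding $C.p \cap V^+_{k-1}(p) \neq \emptyset$. Applying Remark~\ref{rem:rem2} one dimension lower, the boundedness carries over and forces $C.p \cap V^-_{k-1}(p) = \emptyset$.

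The main obstacle is this last orientation step: translating the $(k+1)$-level boundedness of $C$ in $V^-_k(p^{k+1})$ into a $k$-level statement about the placement of $C.p$ with respect to $V^{\pm}_{k-1}(p)$. The uniqueness of $p$ among critical $k$-faces with support $|F|$ is used precisely here, since the existence of another critical $k$-face on $|F|$ would reroute the polar-order comparison and could reverse the conclusion.
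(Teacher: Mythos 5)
There is a genuine gap. Your plan correctly identifies what must be shown --- that $C.p$ meets $V^+_{k-1}(p)$ and not $V^-_{k-1}(p)$, so that the principle following Remark~\ref{rem:rem2} applies --- but the step that would actually establish this is only asserted. You write that the boundedness of $C$ in $V^-_k(p^{k+1})$ ``identifies uniquely the side of $|p|$ on which $C$ lies'' and that ``under the generic flag assumption, this side coincides with the half-space \dots reached by rotating $V_{k-1}$ forward past $\overline{p}$,'' and you yourself flag this orientation step as the main obstacle. That step \emph{is} the lemma; no argument is supplied for it. The paper closes exactly this gap by a different route: it first proves $C.F \pog F$ (if $C \pol F$ then $C$ would precede every $k$-face in its closure, contradicting boundedness of $C$ in $V^-_k(p^{k+1})$ by hyperplanes of $\AA_{p^{k+1}}$), then uses the order-preserving correspondence $F \pol F' \Leftrightarrow p \pol p'$ between facets of $p^{k+1}$ meeting $V^-_k(p^{k+1})$ and the critical faces sharing their supports, so that $F=\min_{\pol}B$ forces $p=\min_{\pol}\{p^k \in \PP^k(\AA) \mid p^{k+1}\in|p^k|\}$; finally, since $C.p$ and $C$ lie bounded in the same cone of $\AA_{p^{k+1}}$, $p$ is also the $\pol$-minimal critical face in the closure of $C.p$, whence $C.p \pog p$. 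None of this machinery appears in your proposal.

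Two subsidiary steps are also unsound. First, from $C \prec p^{k+1}$ and $F \prec p^{k+1}$ you conclude ``hence $F \subset \overline{C}$''; this is a non sequitur --- many codimension-$k$ faces contain $p^{k+1}$ in their closure, and the $\pol$-minimal one need not be a face of $C$ (this is precisely why the paper works with $C.F$ and $C.p$ rather than with $F$ and $p$ as faces of $C$). Second, the claim that $p$ and $F$ lie on opposite sides of $p^{k+1}$ within $|p|$ is neither justified nor needed (and fails when $F=p$, as you note); the relevant relationship between $F$ and $p$ is only that they share a support, which is what lets the polar order be transported from one to the other. Finally, the hypothesis that $p$ is ``the only'' critical face with support $|F|$ is simply the identification of $p$ as the face of $|F|$ through $|F|\cap V_k$ (uniqueness is automatic from genericity); it does not carry the load you assign to it in your last sentence.
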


In order to prove it we need few more remarks. 

\begin{Remark}\label{rem:rem1}
Let $C \in \ch(\AA)$ be a chamber such that $C \cap V_k \neq \emptyset$ and $p \in \PP^k(\AA)$. If $C$ is bounded in $V_k$, then $C \cap V_i = \emptyset$ for all $i<k$. By definition of polar ordering, $C \pog p$ implies that $C$ is contained in the chamber of $\AA_p$ intersected by $V_{k-1}^+$. Since $C$ and $C.p$ are contained in the same cone of the arrangement $\AA_p$, it follows that $C.p \cap V_{k-1}^+(p) \neq \emptyset$, i.e.  $C.p \pog p$. Furthermore, if $C$ is bounded in $V_k$ then $C.p \cap V_k$ is still bounded in  $V_k$. 
\end{Remark}

Remark \ref{rem:rem1} holds, in particular, if $p \prec p^{k+1}$ for $p^{k+1}$ critical face such that $C \prec p^{k+1}$.
Now let $C \in \ch(\AA)$ be a chamber and $p^{k+1} \in \PP^{k+1}(\AA)$ a critical face such that $C \prec p^{k+1}$. If $C$ is a bounded chamber in $V^-_k(p^{k+1})$ and $F=\min_{\pol}\{F^k \in \FF^k \mid F^k \cap V^-_k(p^{k+1}) \neq \emptyset, F^k \prec p^{k+1}\}$, then $C.F \pog F$. Indeed, by construction, $C$ bounded in $V^-_k(p^{k+1})$ is equivalent to say that $C$ is bounded in $V^-_k(p^{k+1})$ by hyperplanes in $\AA_{p^{k+1}}$ and $C \pol F$ would implies that $C \pol F^k$ for all $k$-faces in her closure, but this is impossible as $C$ is bounded. 

\begin{proof}[Proof of Lemma \ref{lem:lem imp}.] By previous remarks $C.F \pog F$. If $F$ is critical then we are done. Otherwise let $F, F'$ be facets in the set
$$ 
B=\{F^k \in \FF^k \mid F \cap  V^-_k(p^{k+1}) \neq \emptyset, F \prec p^{k+1}\} 
$$ 
and $p,p' \in \PP^k(\AA)$ such that $| p |=| F |$ and $| p' |=| F' |$. Then, by construction of polar ordering, we have that
$$F \pol F' \mbox{ if and only if } p \pol p' .$$
Fix $F=\min_{\pol} B$ then $p=\min_{\pol}\{p^k \in \PP^k(\AA) \mid  p^{k+1} \in | p^k |\}.$ Moreover since $C.p$ and $C$ are contained and bounded in the same cone of the arrangement $\AA_{p^{k+1}}$, it follows that $C.p \cap V_k$ is bounded inside this cone. This implies that if $\overline{p}=\min_{\pol}\{p^k \in \PP^k(\AA) \mid  C.p \prec p^k \}$ then $\overline{p}=p$ and $C.p \pog p$. 
\end{proof}

By minimality of $\SS$, the cardinality of $\textup{Crit}_k(\SS)$ equals the $k$-th Betti number $b_k$ and $\sharp~\textup{Crit}(\SS)=\sum_kb_k=\sharp~\ch(\AA)$. 
In details, if $F \in \FF$ is a face and $C \in \ch(\AA)$ is a chamber, then define $\op_F(C)\in\ch(\AA)$ as the unique chamber such that the set of hyperplanes of $\AA$ that separates $C$ and $\op_F(C)$ equals $\AA_F$. The map
\begin{equation}\label{eq:eta}
\begin{split}
\eta\colon \textup{Crit}(\SS)&\to\ch(A) \\
[C\prec p]&\mapsto\op_p(C)
\end{split}
\end{equation}
is a bijection (see \cite{salvettisette}). 

Remark that a chamber $C$ is bounded in $V^-_{k-1}(p)$ (i.e. $[C\prec p]$ is critical) if and only if $\op_{p}(C)$ is bounded in $V^+_{k-1}(p)$ and the following Lemma holds.

\begin{Lemma}\label{lem:lemfond} If $[C \prec p]$ is a $k$-critical cell and $\breve{p}=\min_{\pol}\{p^{k-1} \in \PP^{k-1}(\AA) \mid p \in | p^{k-1} | \}$, then $[\op_{\breve{p}}(C.\breve{p}) \prec \breve{p}]$ is a $(k-1)$-critical cell.
\end{Lemma}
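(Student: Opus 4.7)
To show that $[D\prec\breve p]$ (with $D:=\op_{\breve p}(C.\breve p)$) is a $(k-1)$-critical cell, I must verify that $\codim(\breve p)=k-1$, that $\breve p\cap V_{k-1}\neq\emptyset$, that $\breve p\prec D$, and finally the boundedness of $D$ in $V^-_{k-2}(\breve p)$. The first two are immediate from $\breve p\in\PP^{k-1}(\AA)$; and since $\breve p\prec C.\breve p$ by construction of $C.\breve p$, the involution $\op_{\breve p}$ (which flips across $\AA_{\breve p}$ but preserves incidence with $\breve p$) gives $\breve p\prec D$ as well. The substantive point is the boundedness.

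By the equivalence recalled just above the statement (a chamber $X$ is bounded in $V^-_{j-1}(q)$ if and only if $\op_q(X)$ is bounded in $V^+_{j-1}(q)$), requiring $D$ bounded in $V^-_{k-2}(\breve p)$ is the same as requiring $C.\breve p=\op_{\breve p}(D)$ bounded in $V^+_{k-2}(\breve p)$. I would obtain the latter by descending the boundedness of $C$ one flag level: by criticality of $[C\prec p]$, $C$ is bounded in $V^-_{k-1}(p)$, and the generalised form of Remark \ref{rem:rem2} (which, as explicitly noted in the text, extends the conclusion to the case when the ambient bounded space is $V^\pm_k(p^{k+1})$) applied with the index shift $k\to k-1$ yields that $C$ is bounded in every slice $V^\pm_{k-2}(\breve p\,')$ it meets, for $\breve p\,'\in\PP^{k-1}(\AA)$.

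The two remaining points are fixing the sign as ``$+$'' and transferring the boundedness from $C$ to $C.\breve p$. The sign is controlled by the minimality of $\breve p$ in the polar order: arguing along the lines of the concluding paragraph of the proof of Lemma \ref{lem:lem imp}, the hypothesis that $\breve p=\min_\pol\{p^{k-1}\in\PP^{k-1}(\AA)\mid p\in|p^{k-1}|\}$ identifies $\overline{\breve p}$ as the first point met, in the rotation of $V_{k-2}$ around $V_{k-3}$ inside $V_{k-1}$, that lies on a support through $p$; consequently the cone of $\AA_{\breve p}$ containing $C$ is the one entered through $V^+_{k-2}(\breve p)$, so $C\pog\breve p$, and the same holds for $C.\breve p$, which by definition lies in the same cone of $\AA_{\breve p}$ as $C$. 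That boundedness transfers from $C$ to $C.\breve p$ inside the $V^+_{k-2}(\breve p)$ slice is the direct analogue, one flag level down, of the statement in Remark \ref{rem:rem1} that $C.p\cap V_k$ stays bounded whenever $C\cap V_k$ does. Combining these gives $C.\breve p$ bounded in $V^+_{k-2}(\breve p)$, and hence, by the $\op_{\breve p}$-equivalence, $D$ bounded in $V^-_{k-2}(\breve p)$.

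The main obstacle is the careful sign bookkeeping across the two nested rotations (of $V_{k-1}$ around $V_{k-2}$ in $V_k$, controlling $V^\pm_{k-1}(p)$, and of $V_{k-2}$ around $V_{k-3}$ in $V_{k-1}$, controlling $V^\pm_{k-2}(\breve p)$): one has to verify that the $\pol$-minimality of $\breve p$ really forces the ``$+$'' side, and not the ``$-$'' side, of $V^\pm_{k-2}(\breve p)$ to be the one containing $C.\breve p$. This parallels, one codimension deeper, the sign argument at the heart of the proof of Lemma \ref{lem:lem imp}.
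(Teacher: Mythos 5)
Your proposal is correct and takes essentially the same route as the paper: the paper's proof simply invokes Lemma \ref{lem:lem imp} to get $C.\breve{p} \pog \breve{p}$, deduces that $C.\breve{p}$ is bounded in $V^+_{k-2}(\breve{p})$, and concludes by the $\op_{\breve{p}}$ equivalence that $\op_{\breve{p}}(C.\breve{p})$ is bounded in $V^-_{k-2}(\breve{p})$. You reach the identical conclusion by unpacking the same ingredients (the sign argument inside the proof of Lemma \ref{lem:lem imp} together with the boundedness transfers of Remarks \ref{rem:rem1} and \ref{rem:rem2}) rather than citing the lemma wholesale.
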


\begin{proof} If $[C \prec p]$ is a $k$-critical cell, then $C$ is a bounded chamber in $V^-_{k-1}(p)$ and, by Lemma \ref{lem:lem imp}, $C.\breve{p} \pog \breve{p}$. It follows that $C.\breve{p} \cap V^+_{k-2}(\breve{p}) \neq \emptyset$ is a bounded chamber in $V^+_{k-2}(\breve{p})$ and hence $\op_{\breve{p}}(C.\breve{p})$ is a bounded chamber in $V^-_{k-2}(\breve{p})$, that is $[\op_{\breve{p}}(C.\breve{p}) \prec \breve{p}]$ is a $(k-1)$-critical cell.
\end{proof}

In \cite{Delucchi2010combinat}, Delucchi and the first author gave a more general combinatorial description of the minimal complex constructed in \cite{salvettisette} and briefly described in this Section. They proved that it is possible to obtain the same result of \cite{salvettisette} replicing the flag $\{V_i\}_{i=0,\ldots,d}$ with a flag of pseudospaces and \textit{flipping} the pseudohyperplane $\Cal{V}^-_{k-1}(p)$ around $p \in \PP^k(\AA)$ inside the pseudospace $\Cal{V}_{k}$ instead of rotating the space $V^-_{k-1}(p)$ past $p$ inside the space $V_k$ around $V_{k-2}$.
They called \textit{special ordering} an order obtained using a flag of pseudospaces that allows to describe the minimal complex analogously to the construction in  \cite{salvettisette}.             
This description turn out to be very useful for a better understanding of the combinatorics underling the construction in \cite{salvettisette} and to describe a special class of arrangements, called recursively orderable, that admits a special ordering with very handy description. They also proved that supersolvable arrangements are recursively orderable.

\subsection{Minimal complex for supersolvable arrangements} The class of ``strictly linearly fibered'' arrangements was introduced by Falk and Randell \cite{FaRa} in order to generalize the techniques of Fadell and Neuwirth's proof \cite{FaNe} of asphericity of the braid arrangement (involving a chain of fibrations). Later on, Terao \cite{terao1} recognized that strictly linearly fibered arrangements are exactly those which intersection lattice is { supersolvable} \cite{stanley1}. Since then these arrangements are known as {\em supersolvable arrangements}, and deserved intense consideration. See \cite{orlterao}, for more details.

\begin{Definition}\label{supers} A central arrangement $\AA$ of complex hyperplanes in $\mathbb{C}^d$ is called \emph{supersolvable}
if there is a filtration
$\AA=\AA_d \supset \AA_{d-1} \supset \cdots \supset \AA_2 \supset \AA_1$
such that
\begin{enumerate}
\item[(1)] $\rank(\AA_i)=i$ for all $i=1,\ldots ,d$ ;
\item[(2)] for every two $H, H^{\prime} \in \AA_i$ there exits
some $H^{\prime \prime} \in \AA_{i-1}$ such that 
$H \cap H^{\prime} \subset H^{\prime \prime}$.
\end{enumerate}
\end{Definition}

Let $\AA$ denote an affine real arrangement of hyperplanes in $\mathbb{R}^d$. A flag $\{V_k\}_{k=0,\ldots,d}$ of affine subspaces is called a {\em general flag} if every one of its subspaces is in general position with respect to $\AA$ and if, for every $k=0,\ldots d-1$, $V_k$ does not intersect any bounded chamber of the arrangement $\AA\cap V_{k+1}$. Note that this is a less restrictive hypothesis than the one required for being a {\em generic} flag in \cite{salvettisette}. 

\begin{Remark}\label{generalext} Let $\AA$ be as in Definition \ref{supers} and consider the arrangement $\AA_{d-1}$ in $\mathbb{R}^d$. It is clearly not essential, and the top element of the intersection poset $\LL(\AA_{d-1})$ is a $1$-dimensional line that we may suppose to coincide with the $x_1$-axis. The arrangement $\AA_{d-1}$ determines an essential arrangement on any hyperplane $H$ that meets the $x_1$-axis at some $x_1=t$. 
For all $t$, the intersection of $\AA_{d-1}$ with the hyperplane $H$ determines an essential, supersolvable arrangement $\AA'_{d-1} \subset \mathbb{R}^d$ with $\AA'_{r}=\AA_r$ as sets, for all $r\leq d-1$. Thus, given a flag of general position subspaces for $\AA'_{d-1}$, we can find a combinatorially equivalent flag $\{V_k\}_{k=0,\ldots, d-2}$ on  $H$.

Now let us consider a hyperplane $H$ in $\mathbb{R}^d$ that is orthogonal to the $x_1$-axis, and suppose we are given on it, as above, a valid flag $\{V_k\}_{k=0,\ldots,d-2}$ of general position subspaces for $\AA_{d-1}$. By tilting $H$ around $V_{d-2}$ 
we can  obtain a hyperplane $H'$ that is in general position with respect to $\AA$ and for which all points of $\AA\cap H'$ are on the same side with respect to $V_{d-2}$, and for which $V_0$ lies in an unbounded chamber.

By setting $V_{d-1}:=H'$ and $V_d:=\mathbb{R}^d$ we thus obtain a valid general flag for $\AA=\AA_d$. Define $\PP^k(\AA_d)$ as the points of $\AA_d\cap V_k$ and analogously for $\PP^k(\AA_{d-1})$. The flag remains general by translating $H'=V_{d-1}$ in $x_1$-direction away from the origin: we can therefore suppose that there is $R\in\mathbb{R}$ such that  for all $k$, $k=1,\ldots,d-1$, every element of $\PP^k(\AA_{d-1})$ is contained in a ball of radius $R$ centered in $V_0$, that contains no element of $\PP^k(\AA_{d})\setminus\PP^{k}(\AA_{d-1})$.
\end{Remark}

\begin{Definition}[Recursive Ordering]\label{df:maindef} Let $\AA$ be a real arrangement and $\{V_k\}_{k=0,\ldots, d}$ a general flag. The corresponding {\em recursive ordering} is the total ordering $\totl$ of $\PP(\AA)$ given by setting $p\totl r$ if one of the following occurs
\begin{enumerate}
\item[(i)] $p\in\PP^h(\AA)$, $r\in\PP^k(\AA)$ for $h<k$;
\item[(ii)] there is $k$ such that $p,r\in\PP^k(\AA)$ and we can write $p_0:=\min_{\totl} \{p'\in\PP^{k-1}(\AA)\mid p\subset \vert p'\vert\}$,  $r_0:=\min_{\totl} \{p'\in\PP^{k-1}(\AA)\mid r\subset \vert p'\vert\}$, \begin{enumerate}
\item[(a)] either $p_0 \totl r_0$,
\item[(b)] or $p_0=r_0$ and there exists a sequence of faces
$$p_0 \fl p_1 \fg r_1 \fl p_2 \fg r_2 \cdots \fl p,$$
such that $\codim(p_i)=\codim(r_i)+1=\codim(p)$, and every $r_i$, $p_i$ intersect $|p_0| \cap V_k$, and $p_i\neq r$ for  all $i$.
\end{enumerate}\end{enumerate}
\end{Definition}

\begin{Theorem}\label{ssfol} Any supersolvable complexified arrangement $\AA$ is recursively orderable. 
Moreover, the recursively orderable special ordering $\pol$ can be chosen so that
for all $i=2,\ldots,d$ and all $k=1,\ldots, i-1$, if  $p_1\in\PP^k(\AA_{i-1})$ and  $p_2\in\PP^k(\AA_{i}) \setminus \PP^k(\AA_{i-1})$  lie in the support of the same $(k+1)$-codimensional face,
then $p_1 \pol p_2$.
\end{Theorem}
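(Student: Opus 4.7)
The plan is to proceed by induction on the rank $d$. The base case $d=1$ is trivial, as $\PP(\AA)$ reduces to a single point. For the inductive step I would assume the theorem for all supersolvable arrangements of rank at most $d-1$ and apply the hypothesis to $\AA_{d-1}$, viewed as the rank-$(d-1)$ essential arrangement on the hyperplane $V_{d-1}$ of Remark~\ref{generalext}. This yields a recursive ordering $\pol'$ on $\PP(\AA_{d-1})$ that already satisfies the ``moreover'' clause for all filtration steps $i\leq d-1$. The crucial tool is the general flag $\{V_k\}_{k=0,\ldots,d}$ supplied by Remark~\ref{generalext}, which is arranged so that $\PP^k(\AA_{d-1})$ lies in a ball $B_R(V_0)$ of radius $R$ while $\PP^k(\AA_d)\setminus\PP^k(\AA_{d-1})$ lies entirely outside.

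Next I would extend $\pol'$ to an ordering $\pol$ on $\PP(\AA_d)$ by ordering first by codimension (clause (i) of Definition~\ref{df:maindef}) and, within each codimension $k$, by placing the old points $\PP^k(\AA_{d-1})$ first, in the order $\pol'$, followed by the new points $\PP^k(\AA_d)\setminus\PP^k(\AA_{d-1})$. With this convention the ``moreover'' condition at step $i=d$ holds tautologically, and for $i<d$ it is inherited from $\pol'$. To order the new points at codimension $k$ among themselves I would invoke the supersolvable property (every pairwise intersection in $\AA_i$ sits inside some hyperplane of $\AA_{i-1}$) to supply each new $p$ with a root $p_0\in\PP^{k-1}(\AA_{d-1})$ satisfying $p\subset|p_0|$, and then apply clause (ii) of Definition~\ref{df:maindef}: comparisons go through $p_0$ when the roots differ, and through the walk condition (ii)(b) along the line $|p_0|\cap V_k$ when the roots coincide.

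The hard part will be verifying clause (ii)(b), i.e.\ the existence and consistency of the zigzag chain $p_0\fl p_1\fg r_1\fl p_2\fg r_2\cdots\fl p$ of alternating codim-$k$ and codim-$(k-1)$ faces, for pairs of new points sharing a common root. This reduces to a $1$-dimensional statement on the line $\ell=|p_0|\cap V_k$: by the ball separation from Remark~\ref{generalext}, every intersection of $\AA_d\setminus\AA_{d-1}$ with $\ell$ lies outside $B_R(V_0)$, while the label point $V_{k-1}\cap\ell$ of $p_0$ lies inside it, so every new point sits in one consistent half-line of $\ell$ emanating from the interval of $p_0$. Consequently the faces of $\AA_d$ on $\ell$ line up in a single linear sequence along which the walk can be carried out unambiguously in one direction, and for any two new points with the same root exactly one of the two walks (to $p$ avoiding $q$, or to $q$ avoiding $p$) exists. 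Antisymmetry, transitivity and totality of $\pol$ follow from this one-sided linearity, while clauses (i) and (ii)(a) hold by construction, and the induction closes.
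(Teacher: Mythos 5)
You should first be aware that the paper does not actually prove Theorem \ref{ssfol}: the first assertion is imported from the earlier work \cite{Delucchi2010combinat} of Delucchi and the first author (the paper explicitly says ``They also proved that supersolvable arrangements are recursively orderable''), and the ``moreover'' clause is meant to follow from the flag construction of Remark \ref{generalext}, i.e.\ the ball of radius $R$ centered in $V_0$ that contains every element of $\PP^k(\AA_{d-1})$ and no element of $\PP^k(\AA_d)\setminus\PP^k(\AA_{d-1})$. Your inductive scheme --- induct on the filtration, build the flag as in Remark \ref{generalext}, put old points before new points within each codimension --- is exactly the setup the paper has in mind, so the architecture of your argument matches the intended one.

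The substantive gap is in what you take ``recursively orderable'' to mean. You only verify that the relation of Definition \ref{df:maindef} is a well-defined total order; but the content of the theorem is that this order is a \emph{special ordering} in the sense of \cite{Delucchi2010combinat}, i.e.\ one realizable by a flag of pseudospaces so that the critical-cell and minimal-complex machinery of Section \ref{subsec:MC} applies to it --- this is how $\pol$ is used throughout Sections \ref{sec:main}--\ref{sec:mil}, and it is the genuinely hard part that the paper delegates to the reference. Your proposal never touches it. Two smaller problems: (a) the ball separation alone does not force all new points with a common root $p_0$ onto a single half-line of $\ell=|p_0|\cap V_k$, since a line leaves the ball at both ends; you also need the one-sidedness built into Remark \ref{generalext} (all points of $\AA\cap V_{d-1}$ on the same side of $V_{d-2}$, with $V_0$ in an unbounded chamber) propagated down the flag, together with an argument covering pairs of new points with \emph{different} roots. (b) ``Inherited from $\pol'$'' is not automatic: the roots $\min_{\totl}\{p'\in\PP^{k-1}(\AA_d)\mid p\subset|p'|\}$ and the zigzag walks of clause (ii)(b) for old points are computed in the stratification of $\AA_d$, which is strictly finer than that of $\AA_{d-1}$, so you must check (by an inner induction on $k$) that the restriction of the new order to $\PP(\AA_{d-1})$ agrees with $\pol'$.
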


\begin{Corollary}\label{segmentato}
Let $\AA$ and $\{V_k\}_{k=1,\ldots,d}$ be as in the construction of Remark \ref{generalext}. Then, for every $k=1,\ldots,d$, if $p_1\in\PP^k(\AA_{d-1})$ and $p_2\in\PP^k(\AA)\setminus \PP^{k}(\AA_{d-1})$ are both contained in the support of the same $p\in\PP^{k-1}(\AA)$, then $p_1\pol p_2$ in every special ordering of $\PP^k(\AA)$.
\end{Corollary}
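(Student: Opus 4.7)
The plan is to induct on $k$, with the geometric engine being the ball-radius separation of Remark \ref{generalext}: for every $k$, the points of $\PP^k(\AA_{d-1})\cap V_k$ lie inside the ball $B(V_0,R)$ while those of $(\PP^k(\AA)\setminus\PP^k(\AA_{d-1}))\cap V_k$ lie strictly outside. This in particular separates $p_1\cap V_k$ from $p_2\cap V_k$.

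\textbf{Base case $(k=1)$.} The set $\PP^1$ consists of the points $\AA\cap V_1$. Since $V_0$ lies in an unbounded chamber, these points all sit on the same side of $V_0$ along $V_1$. With $p_0=r_0=C_0$ the unique element of $\PP^0$, reading Definition \ref{df:maindef}(ii)(b) shows that the special ordering on $\PP^1$ must coincide with the linear order along $V_1$ starting at $V_0$: between two points, a chain from $C_0$ to one of them avoiding the other exists iff the first is the nearer to $V_0$. The ball condition immediately places $p_1\pol p_2$.

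\textbf{Inductive step.} Given $p_1,p_2,p$ as in the statement, set
\[
p_1^0:=\min_{\pol}\{q\in\PP^{k-1}(\AA)\mid p_1\subset|q|\},\qquad p_2^0:=\min_{\pol}\{q\in\PP^{k-1}(\AA)\mid p_2\subset|q|\}.
\]
A structural observation I would establish first, using supersolvability axiom (2) of Definition \ref{supers}, is that the common support $|p|$ is itself a flat of $\AA_{d-1}$: indeed $|p|\supset|p_1|$ and in the supersolvable setting the hyperplanes of $\AA$ through $|p_1|$ all lie in $\AA_{d-1}$, so every hyperplane of $\AA$ containing $|p|$ does as well. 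Consequently the line $|p|\cap V_k$ carries $p_1\cap V_k\in B(V_0,R)$ and $p_2\cap V_k\notin B(V_0,R)$, separated by the ball; moreover every candidate for the minimum $p_1^0$ is an $\AA_{d-1}$-flat, so $p_1^0\in\PP^{k-1}(\AA_{d-1})$. The restriction of the flag $\{V_j\}$ to $|p|$ is a flag for the essential arrangement induced on $|p|$, in which the base case of this very induction applies to the pair $p_1,p_2$ seen as $1$-codimensional faces on the line $|p|\cap V_k$.

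\textbf{Splitting via Definition \ref{df:maindef}(ii) and main obstacle.} If $p_1^0\neq p_2^0$, I would derive $p_1^0\pol p_2^0$ by one more application of the inductive hypothesis, after exhibiting a common $(k-2)$-predecessor of $p_1^0$ and $p_2^0$ obtained from supersolvability axiom (2); case (a) then yields $p_1\pol p_2$. If $p_0:=p_1^0=p_2^0$, one must construct the chain $p_0\fl q_1\fg r_1\fl q_2\fg r_2\cdots\fl p_1$ required by case (b), living on the line $|p_0|\cap V_k$ and avoiding the point $p_2$. This chain construction is the hardest step: one has to arrange the walk to stay inside $B(V_0,R)$, i.e., to use only faces of $\AA_{d-1}$, so that the ball separation automatically keeps it clear of the distant $p_2$. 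What makes this feasible is precisely the geometric setup of Remark \ref{generalext} — the tilting of $V_{d-1}$ past $V_{d-2}$ so that all of $\AA\cap V_{d-1}$ lies on a single side, combined with the ball of radius $R$ confining $\PP^k(\AA_{d-1})$ and excluding $\PP^k(\AA)\setminus\PP^k(\AA_{d-1})$.
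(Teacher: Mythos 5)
The paper states Corollary \ref{segmentato} without proof (it is imported from the companion paper \cite{Delucchi2010combinat}), so I am judging your argument on its own merits. Your geometric mechanism is the right one: the ball $B(V_0,R)$ of Remark \ref{generalext} separates $\PP^k(\AA_{d-1})\cap V_k$ from $(\PP^k(\AA)\setminus\PP^k(\AA_{d-1}))\cap V_k$, the base case $k=1$ is correct, and your observation that $|p|$ is a flat of $\AA_{d-1}$ is valid (every flat of $\AA_{d-1}$ contains the modular line $\bigcap\AA_{d-1}$, hence so does $|p|\supset|p_1|$, hence $\AA_{|p|}\subseteq\AA_{d-1}$). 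Moreover the step you single out as hardest -- the chain of Definition \ref{df:maindef}(ii)(b) -- actually closes cleanly: the walk lives on the line $|p_0|\cap V_k$, the point $p_0\cap V_{k-1}\in\PP^{k-1}(\AA_{d-1})\cap V_{k-1}$ lies in $B(V_0,R)$, and the segment of $|p_0|\cap V_k$ between $p_0\cap V_k$ and $p_1\cap V_k$ is therefore contained in the interval $B(V_0,R)\cap|p_0|\cap V_k$, which misses $p_2\cap V_k$.

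The genuine gap is the case $p_1^0\neq p_2^0$. You propose to conclude $p_1^0\pol p_2^0$ ``by one more application of the inductive hypothesis,'' but the inductive hypothesis only compares a face of $\PP^{k-1}(\AA_{d-1})$ with one of $\PP^{k-1}(\AA)\setminus\PP^{k-1}(\AA_{d-1})$, and here there is no reason for $p_2^0$ to lie outside $\PP^{k-1}(\AA_{d-1})$: the face $p$ itself belongs to $\PP^{k-1}(\AA_{d-1})$ and is a candidate in the minimum defining $p_2^0$, so generically $p_2^0$ is an $\AA_{d-1}$-face as well. When $p_1^0$ and $p_2^0$ are two distinct faces of $\PP^{k-1}(\AA_{d-1})$, the statement being proved says nothing about their relative order, and if $p_2^0\pol p_1^0$ then clause (ii)(a) forces $p_2\totl p_1$, the opposite of what you want. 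So the induction as structured does not close: you must either show that $p_2^0\pol p_1^0$ cannot occur in this geometric configuration (which requires a new argument, not the stated inductive hypothesis), or strengthen the inductive statement. A secondary point: the Corollary asserts the conclusion for \emph{every} special ordering in the sense of \cite{Delucchi2010combinat}, a strictly larger class than the recursive orderings of Definition \ref{df:maindef} that your argument addresses.
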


Any order $\pol$ on the faces $\FF$  induced by a general flag $\{V_k\}_{k=0,\ldots,d}$ induces an order $\pol_{\AA}$ on hyperplanes of $\AA$ as follows 
$$H \pol_{\AA} H^{\prime} \mbox{ if and only if } p_H \pol p_{H^{\prime}},$$
$p_H, p_{H^{\prime}} \in \PP^1(\AA)$ being the only two faces such that  $|p_H|=H, |p_{H^{\prime}}|= H^{\prime}$.

By the Theorem \ref{ssfol} and the Corollary \ref{segmentato}, the order $\pol$ can be chosen in such a way that the following property holds
\begin{equation}\label{eq:propord}
\mbox{if } H \in \AA_{i} \setminus \AA_{i-1}, H^{\prime} \in \AA_{j} \setminus \AA_{j-1} \mbox{ with } i <j, \mbox{ then } H \pol_{\AA} H^{\prime}. 
\end{equation}
As no confusion can arise, we will denote the order $\pol_{\AA}$ simply by $\pol$.

\subsection{\textit{nbc}-basis for supersolvable arrangements.}\label{sec:nbcdef}

Let us briefly recall some basic fact on the Orlik-Solomon algebra and its $nbc$-basis.

Fix an arbitrary order $\pol$ on a central arrangement $\AA$ in $\R^d$. Then an ordered $k$-uple $(H_1,\ldots,H_{k+1})$, with $H_1 \pol \ldots \pol H_{k+1}$, is \emph{independent} if $\rank(\cap_{i=1}^{k+1} H_i)=k+1$, and it is \emph{dependent} otherwise. It is called a \textit{circuit} if it is minimally dependent, that is 
$(H_1,\ldots,H_{k+1})$ is dependent, while $(H_1,\ldots,\hat{H}_p, \ldots, H_{k+1})$ is independent for any $1 \leq p \leq k$. 

An ordered independent $k$-uple $(H_1,\ldots,H_{k})$ is a \emph{broken circuit} if it exists an hyperplane $H \pol H_1$ such that $(H,H_1,\ldots,H_{k})$ is a circuit.
It is well known that a basis for the Orlik-Solomon algebra of the arrangement $\AA$ is given by all ordered independent $k$-uples $(H_1,\ldots,H_{k})$, $0 \leq k \leq d$, that do not contain any broken circuit. Such a basis is called a \textit{non broken circuit basis}, or simply \textit{nbc-basis}.

Let $\AA \subset \R^d$ be a central supersolvable arrangement.  Bj{\"o}rner and Ziegler (see \cite{bjorner1991broken}) proved that in a supersolvable arrangement a  $k$-uple $(H_1,\ldots,H_{k})$ does not contain a broken circuit if and only if it does not contain a $2$-broken circuit. From which we get the following Proposition.

\begin{Proposition} Let $\AA$ be a supersolvable arrangement in $\R^d$ together with an order $\pol$ that verifies property \eqref{eq:propord}. Then the set \begin{equation*}
\nbc_k(\AA):=\{(H_{1}, \dots, H_{k}) \in \AA^k \mid H_{j}\in\AA_{i_j}\setminus\AA_{i_j-1}, i_j<i_{j+1}\} 
\end{equation*} 
is a \textit{nbc}-basis of the $k$-stratum of the Orlik-Solomon algebra associated to $\AA$.
\end{Proposition}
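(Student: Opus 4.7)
The plan is to deduce the proposition from the Bj\"orner--Ziegler equivalence (broken $\Leftrightarrow$ 2-broken in supersolvable arrangements) by showing that a tuple $H_1\pol\cdots\pol H_k$ with $H_j\in\AA_{i_j}\setminus\AA_{i_j-1}$ contains no 2-broken circuit exactly when $i_1<\cdots<i_k$. The direction ``same level $\Rightarrow$ 2-broken circuit'' is immediate: if $H_a,H_b\in\AA_i\setminus\AA_{i-1}$ with $a<b$, condition (2) of Definition~\ref{supers} supplies $K\in\AA_{i-1}$ with $H_a\cap H_b\subset K$, and (\ref{eq:propord}) gives $K\pol H_a$ with $K\neq H_a,H_b$, so that $(K,H_a,H_b)$ is a circuit and $(H_a,H_b)$ a 2-broken circuit.

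For the converse I would first establish the following structural lemma: for every rank-two flat $L$ of $\AA$, if $m^*$ denotes the maximum level attained by any hyperplane through $L$, then $\AA_{m^*-1}$ contains exactly one hyperplane through $L$. The argument is dual-linear. If two distinct $H_1,H_2\in\AA_{m^*-1}$ both contained $L$, every pencil-hyperplane would be a linear combination of $H_1,H_2$ (viewed as linear forms) and hence lie in the dual span of $\AA_{m^*-1}$; then for any $H_3\in\AA_{m^*}\setminus\AA_{m^*-1}$ through $L$ and any $H'\in\AA_{m^*}\setminus\AA_{m^*-1}$ outside this dual span (which exists because $\rank(\AA_{m^*})=\rank(\AA_{m^*-1})+1$), condition (2) applied to $(H_3,H')$ would produce $K=\alpha H_3+\beta H'\in\AA_{m^*-1}$; this contradicts both alternatives $\beta=0$ (which forces $K$ and $H_3$ to be the same hyperplane, with $K\in\AA_{m^*-1}$ but $H_3\notin\AA_{m^*-1}$) and $\beta\neq 0$ (which rewrites $H'=(K-\alpha H_3)/\beta$ and places $H'$ into the dual span of $\AA_{m^*-1}$). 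Existence of at least one low-level pencil-hyperplane follows from condition (2) applied to two pencil-hyperplanes at level $m^*$.

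Granting the lemma, for any pair $(H_a,H_b)$ of the tuple with $i_a<i_b$ and $L=H_a\cap H_b$, the lemma identifies $H_a$ as the unique pencil-hyperplane at level $\le m^*-1$ and places $H_b$ at level $m^*=i_b$; any $H\pol H_a$ in the pencil has level $\le i_a<m^*$ by (\ref{eq:propord}), so $H=H_a$ by uniqueness, ruling out a third hyperplane to complete a 3-circuit. Independence of each strictly-increasing-level tuple comes from a parallel dual-span argument showing that no $H\in\AA_i\setminus\AA_{i-1}$ can contain the center $X_{i-1}$ of $\AA_{i-1}$; consequently $H_k$ does not lie in the dual span of $H_1,\ldots,H_{k-1}\in\AA_{i_k-1}$, and by induction the tuple has rank $k$. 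The main obstacle is the structural lemma itself, the single place where condition (2) and the strict rank growth must genuinely interact; everything else is routine bookkeeping on top of Bj\"orner--Ziegler.
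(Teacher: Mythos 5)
Your proposal is correct, and its skeleton coincides with the paper's: both reduce to pairs via the Bj\"orner--Ziegler equivalence, and your first direction (two hyperplanes on the same level give a $2$-broken circuit via condition (2) of Definition \ref{supers} and property \eqref{eq:propord}) is word-for-word the paper's argument. Where you diverge is in the converse. The paper argues directly that for $H_a,H_b$ on different levels and any $H\pol H_a$ one has $\rank(H\cap H_a\cap H_b)=3$, by a descent: it declares the case of $H$ at a strictly lower level ``obvious'' and, for $H$ one level below $H_a$, uses condition (2) to replace $H$ by some $H'\in\AA_{h_a-1}$ with $H\cap H_a=H'\cap H_a$; the base case and the termination of this descent are left implicit (the authors themselves footnote that they could not find a detailed proof in the literature). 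You instead isolate a clean structural lemma --- each rank-two flat $L$ has exactly one hyperplane in $\AA_{m^*-1}$, where $m^*$ is the top level met by the pencil through $L$ --- and prove it by a dual-span argument that genuinely uses the rank jump $\rank(\AA_{m^*})=\rank(\AA_{m^*-1})+1$ together with condition (2); the converse then falls out, since $m^*=i_b$ and uniqueness forces any $H\pol H_a$ through $L$ to equal $H_a$. This lemma-based route is more complete than the paper's sketch. You also check independence of the strictly-increasing-level tuples (no $H\in\AA_i\setminus\AA_{i-1}$ contains the center of $\AA_{i-1}$, by the same dual-span mechanism), a point the paper passes over in silence even though $\nbc$ tuples must be independent by definition. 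The only cosmetic caveats: condition (2) must be applied to \emph{distinct} hyperplanes (which you do), and you should state explicitly that $m^*=i_b$ is itself a consequence of the uniqueness in your lemma rather than an assumption.
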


\begin{proof}\footnote{This Proposition is a quite known fact but we could not find a detailed proof of it anywhere, so we provided it here.} By Bj{\"o}rner and Ziegler's  result, it is enough to check that $k$-uples $(H_{1}, \dots, H_{k}) \in \nbc_k$ do not contain couples $(H_i,H_j)$ that are broken circuits. Since $\AA$ is a supersolvable arrangement, if $H_i,H_j$ are hyperplanes that belong to the same  subarrangement $\AA_{h_i+1} \setminus \AA_{h_i}$  then it exists $H \in \AA_{h_i}$ such that $H_i \cap H_j \subset H$, that is $(H,H_i,H_j)$ is a broken circuit. 

On the other hand, if $H_i \in \AA_{h_i+1} \setminus \AA_{h_i}$ and $H_j \in \AA_{h_j+1} \setminus \AA_{h_j}$ belong to different subarrangements with $h_i<h_j$, then for any $H \pol H_i$ we get that $\rank(H \cap H_i \cap H_j)=3$. Indeed if $H \in \AA_h$, $h<h_i$ this is obvious while, if $H \in \AA_{h_i}$ then it exists $H^{\prime} \in \AA_{h_i-1}$ such that $H \cap H_i=H^{\prime} \cap H_i$ and $\rank(H \cap H_i \cap H_j)=\rank(H^{\prime} \cap H_i \cap H_j)=3$.
 \end{proof}
 Following the previous Proposition we will denote $$\nbc(\AA):=\cup_k\nbc_k (\AA). $$
 For the seek of simplicity, when no confusion arises, we will omit $\AA$ in the rest of the paper and we will simply denote $\nbc_k(\AA)$ by $\nbc_k$ and $\nbc(\AA)$ by $\nbc$. Similarly, we will simply denote $\PP^k(\AA)$ by $\PP^k$ and $\PP(\AA)$ by $\PP$.

\section{Orlik-Solomon algebra and minimal complex}\label{sec:main}

In this Section $\AA$ is a supersolable arrangement in $\R^d$ endowed with a recursive special ordering $\pol$ induced by a generic flag $\{V_k\}_{k=0,\ldots,d}$ of affine subspaces as in Remark \ref{generalext}, i.e. $\pol$ satisfies the conditions in Theorem \ref{ssfol}.

\subsection{A natural relation}

If  $|p|=\cap_{j=1}^mH'_{j}$ is the support of $p$, then $p$ is the only $k$-codimensional face that contains the intersection $|p| \cap V_k$. That is there is a natural bijection between elements of the intersection poset $\LL(\AA)$ and critical faces. Moreover, by the properties of Definition \ref{supers}, to get a $k$-codimensional intersection $|p|$ in the poset $\LL(\AA)$ of a supersolvable arrangement it is enough to consider a $k$-uple $(H_{1}, \dots, H_{k}) \in \nbc_k$ such that $\cap_{i=1}^kH_{i}=\cap_{j=1}^mH'_{j}=|p|$. 

Notice that the latter is not a bijection. With the previous notations, if $H_j \in \AA_{i_j}\setminus\AA_{i_j-1}$ and $H \neq H_j$ is another hyperplane in  $\AA_{i_j}\setminus\AA_{i_j-1}$ that contains $p$,  then $(H_{1}, \dots, H_{j}, \dots, H_{k}) $ and $(H_{1}, \dots, H, \dots, H_{k})$, with $H$ in the $j$-th position, are both $k$-uples in $\nbc_k$ with intersection equals the support $|p|$ of $p$.

Let $H \in \AA_{i_j}\setminus\AA_{i_j-1}$ be an hyperplane that contains the critical face $p$, we define the set
$$[H]_p:=\{H'\in\AA_{i_j}\setminus\AA_{i_j-1}~|~p\subset H'\}.$$ 
  
Then to any critical $k$-codimensional face $p$ is attached one and only one $k$-uple of classes of hyperplanes 
\begin{equation}\label{eq:classi}
[p]:=([H_{1}]_p, \dots, [H_{k}]_p).
\end{equation}

It is an easy remark that, if $p^{\prime} \prec p$ is a $(k-1)$-critical face, then it exists an index $1 \leq j \leq k$ such that $[p^{\prime}]=([H^{\prime}_{1}]_{p^{\prime}}, \ldots, \widehat{[H^{\prime}_{j}]}_{p^{\prime}}, \ldots, [H^{\prime}_{k}]_{p^{\prime}})$, $H^{\prime}_i \in [H_i]_p$.
Remark that the inclusion $[H^{\prime}_{1}]_{p^{\prime}} \subseteq [H^{\prime}_{1}]_{p}$ holds.
\begin{Definition} Given two chambers $C, C'\in\ch(\AA)$ and an hyperplane $H$ in $\AA$, we define
\begin{equation*}
    (C~|~C')_H :=
    \begin{cases}
      -1 & \text{if } H \text{ separates } C \text{ and } C',\\
      1 & \text{otherwise }.
    \end{cases}
\end{equation*}
\end{Definition}
With previous notations, define
\begin{equation}\label{df:nablak}
\begin{split}
f_k\colon & \textup{Crit}_k(\SS)\to\nbc_k
\end{split}
\end{equation}

as $f_k([C\prec p])=(H_1, \dots, H_k)$ if and only if
\begin{enumerate}
\item[(i)]$\cap_{i=1}^k H_i=|p|$;
\item[(ii)] if $(-1)^{k-j}=-1$, then $$H_j= \min_{\pol}\{ H \in [H_j]_p ~|~ (C~|~C_0)_H=-1 \};$$
\item[(iii)] if $(-1)^{k-j}=1$,  then 
$$H_j= \max_{\pol}\{ H \in [H_j]_p ~|~ (C~|~C_0)_H=1 \};$$
\item[(iv)] $H_k$ is a wall of $C$ and $(C~|~C_0)_{H_k}=1$.
\end{enumerate}

The above relation naturally define a relation 
\begin{equation}\label{df:nabla}
f\colon \textup{Crit}(\SS)\to\nbc
\end{equation}
between the critical cells of Salvetti's complex and the $nbc$-basis of Orlik-Solomon algebra of the supersolvable arrangement $\AA$.
 
\subsection{Bijection between $\nbc$ and critical cells.} In this Section, we prove that $f_k$, $k=0,\ldots,d$, are well defined bijective maps.

\begin{Lemma}\label{lem:sottofaccia} If $p \in \PP^k$ is a $k$-critical face with $[p]=([H_1]_p, \dots, [H_k]_p)$, then it exists one and only one $(k-1)$-critical face $\breve{p}$ such that $\breve{p}\prec p$ and $[\breve{p}]=([H_1]_{p}, \dots, [H_{k-1}]_{p})$.
\end{Lemma}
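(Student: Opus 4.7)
The plan is to build $\breve{p}$ directly from the supersolvable structure of the localized arrangement $\AA_{|p|}$, verify the class-tuple identity by stratum bookkeeping, and derive the face relation $\breve{p}\prec p$ from the geometry of the generic flag.

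Let $i_1<\cdots<i_k$ be the strata appearing in $[p]$, and set $L:=\bigcap_{H\in\AA_{|p|}\cap\AA_{i_{k-1}}}H$. Supersolvability of $\AA_{|p|}$ (inherited from $\AA$) forces its rank jumps along the filtration $\AA_{|p|}\cap\AA_\bullet$ to occur exactly at $i_1,\ldots,i_k$, so $\AA_{|p|}\cap\AA_{i_{k-1}}$ has rank $k-1$ and $L\in\LL(\AA)$ has codimension $k-1$, independent of the choice of representatives within the classes $[H_j]_p$. Adjoining any $H\in[H_k]_p$ to $\AA_{|p|}\cap\AA_{i_{k-1}}$ raises the rank to $k$, so $L\cap H\subsetneq L$ and no such $H$ contains $L$; consequently $\AA_L=\AA_{|p|}\cap\AA_{i_{k-1}}$ and the strata of $\AA_L$ are $\{i_1,\ldots,i_{k-1}\}$. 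By the natural bijection $\LL(\AA)\leftrightarrow\PP$ recalled at the opening of Section~\ref{sec:main}, I take $\breve{p}$ to be the unique $(k-1)$-critical face with support $L$.

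The class-tuple equality then reads, for every $j\le k-1$,
\[
[H_j]_{\breve{p}}=\AA_L\cap(\AA_{i_j}\setminus\AA_{i_j-1})=\AA_{|p|}\cap(\AA_{i_j}\setminus\AA_{i_j-1})=[H_j]_p,
\]
and running the same chain of identities backwards gives uniqueness: for any candidate $\breve{p}'$, the condition $[\breve{p}']=([H_1]_p,\ldots,[H_{k-1}]_p)$ forces $\AA_{|\breve{p}'|}=\AA_{|p|}\cap\AA_{i_{k-1}}$, hence $|\breve{p}'|=L$, and the natural bijection leaves only $\breve{p}'=\breve{p}$.

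The hard part will be verifying $\breve{p}\prec p$, i.e.\ that $p$ is a wall of the chamber of $\AA|_L$ realized by $\breve{p}$. I would reduce this to a one-dimensional check on the line $L\cap V_k\subset V_k$: the segment $\breve{p}\cap V_k$ contains $L\cap V_{k-1}$ in its interior, and what must be proved is that $|p|\cap V_k=L\cap H_k\cap V_k$ is one of its two endpoints. The other candidates are traces of hyperplanes $H'\in\AA$ with $H'\not\supset L$ and $H'\notin[H_k]_p$; those in strata $\le i_{k-1}$ are already in $\AA_L$ by the stratum analysis above, so contribute no new cut, while those in strata $\ge i_k$ have their traces pushed strictly beyond $|p|\cap V_k$ by the ball-separation built into Remark~\ref{generalext}, combined with the filtration compatibility of the recursive ordering (Corollary~\ref{segmentato} and property \eqref{eq:propord}). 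Finally, Lemma~\ref{lem:lem imp}, applied to the facets of the cone at $p$ meeting $V^-_{k-1}(p)$, would supply the $\pol$-minimality needed to dismiss any residual interference from facets sharing $|p|$, so $|p|\cap V_k$ is indeed the nearest trace and $\breve{p}\prec p$.
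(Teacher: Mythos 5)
Your construction is a genuinely different route from the paper's, and for most of the statement it is the more complete one. The paper never exhibits $\breve{p}$: its proof is a pure uniqueness argument, showing that two distinct $(k-1)$-critical faces $p_1,p_2\prec p$ with classes drawn from $[H_1]_p,\dots,[H_{k-1}]_p$ would force $|p_1|\cap|p_2|$ to be a codimension-$\geq k$ element of the intersection lattice of $\AA_{i_{k-1}}$ containing $|p|$, impossible since $|p|\subset H_k$ with $H_k\notin\AA_{i_{k-1}}$; existence, and the equality $[\breve{H}_i]_{\breve{p}}=[H_i]_p$, are then asserted. Your computation $\AA_L=\AA_{|p|}\cap\AA_{i_{k-1}}$, the resulting class-tuple identity, and the uniqueness via $|\breve{p}'|=L$ are correct and actually supply the justification the paper omits; the two uniqueness arguments are at bottom the same lattice computation, packaged constructively in your case.

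The gap is exactly where you flagged it, and your sketch of $\breve{p}\prec p$ would not survive being written out. The claim that the hyperplanes ``in strata $\le i_{k-1}$ are already in $\AA_L$'' is false: your stratum analysis gives $\AA_L=\AA_{|p|}\cap\AA_{i_{k-1}}$, so a hyperplane $H'\in\AA_{i_{k-1}}$ with $H'\not\supset p$ lies in no class of $p$, does not contain $L$, and cuts the line $L\cap V_k$ at a point that may perfectly well separate $L\cap V_{k-1}$ from $|p|\cap V_k$; nothing in Remark \ref{generalext} or Corollary \ref{segmentato} controls those traces. In fact, read literally as $p\subset\overline{\breve{p}}$, the relation already fails for $k=1$, where it would say that the critical facet of every hyperplane is a wall of $C_0$. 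The condition the paper actually uses downstream (Lemmas \ref{lem:lemfond} and \ref{lem:opp}) is the support containment $p\subset|\breve{p}|$, i.e.\ $\breve{p}=\min_{\pol}\{p^{k-1}\in\PP^{k-1}\mid p\in|p^{k-1}|\}$, and that is immediate from your construction since $|p|\subset L$. So the repair is not to push the one-dimensional endpoint analysis harder, but to replace the face relation by the support relation, after which your argument closes completely.
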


\begin{proof}Let $p \in \PP^k$ be a $k$-critical face with $[p]=([H_1]_p, \dots, [H_k]_p)$, $H_i \in \AA_{h_i} \setminus \AA_{h_{i-1}}$ and let $p_1,p_2 \prec p$ be two $(k-1)$-critical faces with $[p_1]=([H'_1]_{p_1},\ldots,[H'_{k-1}]_{p_1})$ and $[p_2]=([H''_1]_{p_2},\ldots,[H''_{k-1}]_{p_2})$, where $H'_{i}, H''_{i} \in [H_i]_{p}$, $i=1,\ldots,k-1$. If $p_1 \neq p_2$, then $\mid p_1 \mid \cap \mid p_2 \mid$ would be a space of codimension $ \geq k$ that contains $p$ and this is not possible as $\mid p_1 \mid \cap \mid p_2 \mid$ is an element in the intersection lattice of the arrangement $\AA_{h_{k-1}}$ while $ \mid p \mid \subset H_k$ and $H_k \in \AA_{h_k} \setminus \AA_{h_{k-1}}$   . Then there is a unique $(k-1)$-critical face $\breve{p} \prec p$, $[\breve{p}]=([\breve{H}_{1}]_{\breve{p}}, \ldots , [\breve{H}_{k-1}]_{\breve{p}})$ and it follows that $[\breve{H}_{i}]_{\breve{p}}=[H_{i}]_{p}$ for any $i=1,\ldots,k-1$.
\end{proof}

Fix a $k$-critical cell $[C \prec p]$ and $\breve{p}$ as in Lemma \ref{lem:sottofaccia}. 

\begin{Lemma}\label{lem:opp} If $[C \prec p]\in \textup{Crit}_k(\SS)$ is a $k$-critical cell then $[\op_{\breve{p}}(C.\breve{p}) \prec \breve{p}]$ is a $(k-1)$-critical cell.
\end{Lemma}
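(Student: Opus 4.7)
The plan is to deduce this directly from Lemma \ref{lem:lemfond}, by showing that the face $\breve{p}$ produced by Lemma \ref{lem:sottofaccia} coincides with $\min_{\pol}\{p^{k-1}\in\PP^{k-1}(\AA)\mid p\in|p^{k-1}|\}$, the polar-minimal $(k-1)$-critical face whose support contains $|p|$. Once this identification is established, there is nothing more to check.

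Write $[p]=([H_1]_p,\ldots,[H_k]_p)$ with $H_j\in\AA_{h_j}\setminus\AA_{h_j-1}$ and $h_1<\cdots<h_k$, so that $|\breve{p}|=\bigcap_{j<k}H_j$ by the class structure of a supersolvable arrangement; hence $|\breve{p}|\in\LL(\AA_{h_{k-1}})\subseteq\LL(\AA_{h_k-1})$ and $\breve{p}\in\PP^{k-1}(\AA_{h_k-1})$. I next claim that $|\breve{p}|$ is the \emph{only} codim-$(k-1)$ element of $\LL(\AA_{h_k-1})$ containing $|p|$. Indeed, any such $X$ is an intersection of hyperplanes of $\AA_{h_k-1}$ containing $|p|$, and these form exactly the sub-arrangement $\BB=\bigcup_{j<k}[H_j]_p$; but $\BB$ is the $(k-1)$-th step of the supersolvable filtration of the localization $\AA_{|p|}$, hence is itself central supersolvable of rank $k-1$, whose intersection lattice has a unique top element $\bigcap_{j<k}H_j=|\breve{p}|$. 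Consequently, any other $(k-1)$-critical face $p'$ with $|p|\subset|p'|$ satisfies $|p'|\notin\LL(\AA_{h_k-1})$, while $|p'|\in\LL(\AA_{h_k})$ because every hyperplane defining $|p'|$ contains $|p|$ and therefore belongs to $\bigcup_{j\le k}[H_j]_p\subseteq\AA_{h_k}$; thus $p'\in\PP^{k-1}(\AA_{h_k})\setminus\PP^{k-1}(\AA_{h_k-1})$.

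Finally, I will invoke Theorem \ref{ssfol} with its indices $k$ and $i$ specialized to $k-1$ and $h_k$ respectively: since $|p|$ is a codim-$k$ flat contained in both $|\breve{p}|$ and $|p'|$, the hypothesis of the theorem is satisfied and yields $\breve{p}\pol p'$. Hence $\breve{p}=\min_{\pol}\{p^{k-1}\in\PP^{k-1}(\AA)\mid p\in|p^{k-1}|\}$, and Lemma \ref{lem:lemfond} delivers the conclusion $[\op_{\breve{p}}(C.\breve{p})\prec\breve{p}]\in\textup{Crit}_{k-1}(\SS)$. The delicate point I expect to spend the most care on is the uniqueness of $|\breve{p}|$ inside $\LL(\AA_{h_k-1})$ above $|p|$: it rests on the supersolvability-specific fact that representatives of distinct classes of hyperplanes through $|p|$ form an independent system whose cardinality equals the rank of the localization, and so no exotic codim-$(k-1)$ flat of $\AA_{h_k-1}$ can sneak in above $|p|$.
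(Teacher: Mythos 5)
Your proposal is correct and follows the same route as the paper: the paper's proof is exactly the one-line observation that $\breve{p}$ from Lemma \ref{lem:sottofaccia} equals $\min_{\pol}\{p^{k-1}\in\PP^{k-1}(\AA)\mid p\subset|p^{k-1}|\}$ because $\pol$ is a recursive order satisfying Theorem \ref{ssfol}, after which Lemma \ref{lem:lemfond} applies. You simply supply, correctly, the supersolvability argument (uniqueness of the codimension-$(k-1)$ flat of $\AA_{h_k-1}$ above $|p|$, plus Theorem \ref{ssfol} at level $i=h_k$) that the paper leaves implicit.
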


\begin{proof} It follows from Lemma \ref{lem:lemfond} and the fact that $\breve{p}=\min_{\pol}\{p' \in \PP^k \mid p' \prec p\}$, as $\pol$ is a recursive order.
\end{proof}

Let $\widetilde{C}.\breve{p}$ be the chamber of the arrangement $\AA_{\breve{p}}$ that contains the chamber $C.\breve{p}$ and hence $C$. Let $[C'\prec p]$ be another $k$-critical cell with $C' \neq C$ and $C' \subset \widetilde{C}.\breve{p}$. Then $C$ and $C'$ have to be separated by at least one hyperplane and, as they belong to the same chamber of $\AA_{\breve{p}}$, they are separated by an hyperplane $H \in [H_k]_{p}$. It is also an easy remark that $(C\mid C_0)_H=1$ if and only if $(C'\mid C_0)_H=-1$.

Viceversa, any hyperplane $H \in [H_k]_{p}$ intersects the chamber $\widetilde{C}.\breve{p}$ and hence it is the separating hyperplane of two different chambers contained in $\widetilde{C}.\breve{p}$. That is for each hyperplane $H \in [H_k]_{p}$ there is a chamber $C' \subset \widetilde{C}.\breve{p}$ such that $H$ is a wall of $C'$ and  $(C'\mid C_0)_H=1$ and we proved the following Lemma.

\begin{Lemma}\label{lem:muro} Let $p \in \PP^k$ be a $k$-critical face, $[p]=([H_1]_p, \dots, [H_k]_p)$. If $[C \prec p] \in \textup{Crit}_k(\SS)$ is a $k$-critical cell, then $C$ has a wall $H \in [H_k]_p$ that satisfies $(C\mid C_0)_H=1$. 
\end{Lemma}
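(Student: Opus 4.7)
My plan is to first analyze the subdivision induced by $\breve{p}$ from Lemma \ref{lem:sottofaccia}. A key preliminary observation, which I would establish from Definition \ref{supers} together with property \eqref{eq:propord}, is the identification $[H_k]_p=\AA_p\setminus\AA_{\breve{p}}$. Indeed, if some $H\in[H_k]_p$ contained $|\breve{p}|$, then $|p|\supseteq |\breve{p}|\cap H_k=|\breve{p}|$ would equate codimensions $k$ and $k-1$, which is absurd; conversely, any $H\in\AA_j\setminus\AA_{j-1}$ with $j<h_k$ containing $|p|$ must, by the filtration structure and \eqref{eq:propord}, also contain $|\breve{p}|$, hence belong to $\AA_{\breve{p}}$.

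Granted this identification, let $\widetilde{C}.\breve{p}$ denote the $\AA_{\breve{p}}$-chamber containing $C$. The hyperplanes in $[H_k]_p$ pass through $|p|\subset\overline{\widetilde{C}.\breve{p}}$ and, being outside $\AA_{\breve{p}}$, strictly subdivide $\widetilde{C}.\breve{p}$. Hence the local $\AA_p$-cone $\widetilde{C}.p$ of $C$ at $|p|$ is a proper sub-cone of the local cone of $\widetilde{C}.\breve{p}$, and at least one of its bounding walls lies in $[H_k]_p$. These walls are, near $p$, walls of $C$ itself, so $C$ admits at least one wall in $[H_k]_p$.

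To locate such a wall on the $C_0$-side, I would argue by contradiction. Suppose every wall of $C$ in $[H_k]_p$ separated $C$ from $C_0$. Then $\widetilde{C}.p$ would be the $\AA_p$-cone antipodal, with respect to $[H_k]_p$, to the cone containing $C_0$; via the bijection $\eta$ of \eqref{eq:eta}, this extremal configuration corresponds to $\op_p(C)$ agreeing with $C_0$ across every hyperplane in $[H_k]_p$. However, the criticality of $[C\prec p]$ combined with Lemma \ref{lem:lem imp} forces $C.\breve{p}\pog\breve{p}$, placing $C$ on the ``$V_{k-1}^+(p)$ side'' near $p$; together with the orientation of the flag with respect to $C_0$ (as in Remark \ref{generalext}), this rules out the antipodal configuration and yields the desired wall.

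The main obstacle I expect is precisely this last step: converting the polar-ordering inequality $C.\breve{p}\pog\breve{p}$ into the geometric half-space statement $(C|C_0)_H=1$ for a suitable $H\in[H_k]_p$ seems to require a careful unpacking of the ``rotation past $p$'' construction of Remark \ref{generalext}, iterating the recursive structure across the whole flag and keeping track of how $C_0$ sits relative to each $V_i$.
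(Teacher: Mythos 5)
Your first two steps are sound and set up the same geometric picture the paper itself uses: the identification $[H_k]_p=\AA_p\setminus\AA_{\breve{p}}$ is correct, and the observation that these hyperplanes subdivide the chamber $\widetilde{C}.\breve{p}$ of $\AA_{\breve{p}}$ containing $C$, so that $C$ has at least one wall in $[H_k]_p$, is exactly the picture behind the paper's argument. The genuine gap is in your third step, which is where the entire content of the lemma lives. You correctly reduce the claim to excluding the extremal configuration in which every hyperplane of $[H_k]_p$ separates $C$ from $C_0$, but you do not exclude it. The tool you invoke, namely $C.\breve{p}\pog\breve{p}$ from Lemma \ref{lem:lem imp}, cannot do this job on its own: that inequality holds for \emph{every} critical cell $[C'\prec p]$ (it is precisely the content of Lemma \ref{lem:lemfond}), so it does not distinguish the one ``antipodal'' chamber from the other $\sharp [H_k]_p$ chambers of $\AA$ adjacent to $p$ inside $\widetilde{C}.\breve{p}$. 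What is actually needed is an argument tying the boundedness of $C\cap V_{k-1}^-(p)$ (criticality) to the position of $C_0$, the chamber containing $V_0$, which the general flag of Remark \ref{generalext} places in an unbounded chamber; you yourself name this conversion as the ``main obstacle'' and leave it unresolved, so the proposal is a plan with the decisive step missing rather than a proof.

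For comparison, the paper does not route through the polar ordering at this point at all. It argues entirely inside $\widetilde{C}.\breve{p}$: any two distinct critical chambers over $p$ contained in $\widetilde{C}.\breve{p}$ are separated by some $H\in[H_k]_p$ and carry opposite signs $(\,\cdot\mid C_0)_H$, and conversely every $H\in[H_k]_p$ crosses $\widetilde{C}.\breve{p}$ and is therefore the common wall of an adjacent pair of chambers there, exactly one of which satisfies $(C'\mid C_0)_H=1$; together with the uniqueness of the positive wall (Lemma \ref{lem:unicmuro}) this pairs critical chambers with hyperplanes of $[H_k]_p$. If you want to rescue your contradiction argument, the missing ingredient is a direct proof that the extremal chamber --- the one lying on the far side of all of $[H_k]_p$ from $C_0$ --- cannot be bounded in $V_{k-1}^-(p)$, i.e.\ cannot give a critical cell over $p$; as written, nothing in your proposal establishes this.
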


\begin{Lemma}\label{lem:unicmuro} The wall in Lemma \ref{lem:muro} is unique.
\end{Lemma}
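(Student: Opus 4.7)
\medskip
\noindent\textbf{Proof plan.}

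The plan is to argue by contradiction: assume $C$ admits two distinct walls $H,H'\in[H_k]_p$ with $(C|C_0)_H=(C|C_0)_{H'}=1$. The first step uses supersolvability to locate $H\cap H'$. Since $H,H'\in\AA_{i_k}\setminus\AA_{i_k-1}$, Definition \ref{supers}(2) yields $H''\in\AA_{i_k-1}$ with $H\cap H'\subseteq H''$. The key sub-claim is that $H''\in\AA_{\breve p}$, equivalently $|\breve p|\subseteq H''$. Clearly $H''\supseteq H\cap H'\supseteq|p|$, so $H''\in\AA_p$. If $|\breve p|\not\subseteq H''$, then $H''\cap|\breve p|$ would be a proper codimension-one subspace of $|\breve p|$, of rank $k$ in $\LL(\AA_{i_k-1})$, containing $|p|$; since $|p|$ also has rank $k$, this forces $H''\cap|\breve p|=|p|$, placing $|p|\in\LL(\AA_{i_k-1})$. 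This contradicts the $\nbc$-structure of $[p]$, because the levels $i_1<\cdots<i_k$ attached to $[p]$ are strictly increasing and $|p|=|\breve p|\cap H_k$ is realized in $\LL(\AA)$ only at level $i_k$.

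The second step converts this into geometry inside $\widetilde C.\breve p$. Since $\widetilde C.\breve p$ is an open chamber of $\AA_{\breve p}$ and $H''\in\AA_{\breve p}$, one has $\widetilde C.\breve p\cap H''=\emptyset$, hence $\widetilde C.\breve p\cap(H\cap H')\subseteq\widetilde C.\breve p\cap H''=\emptyset$. So the traces of $H$ and $H'$ in $\widetilde C.\breve p$ are disjoint, and the same argument applied to any pair of hyperplanes in $[H_k]_p$ shows that the entire pencil $[H_k]_p$ is non-crossing inside $\widetilde C.\breve p$. It therefore linearly orders a chain $S_0,\dots,S_N$ of subchambers of $\widetilde C.\breve p$, with $C$ equal to one of them; extremal slices carry a single wall in $[H_k]_p$, while middle slices carry exactly two.

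The final step combines this with the criticality of $[C\prec p]$ to reach the contradiction. Using the flag construction of Remark \ref{generalext} together with the compatibility property (\ref{eq:propord}) of $\pol$ guaranteed by Theorem \ref{ssfol}, the plan is to show that $V_0\in C_0$ sits on one prescribed extremal side of the pencil $[H_k]_p$ as seen from $\widetilde C.\breve p$; concretely, $C_0$ lies on the $S_0$-side of every hyperplane of $[H_k]_p$. If $C=S_i$ carries walls $K_i,K_{i+1}\in[H_k]_p$, this forces $(C|C_0)_{K_i}=-1$ and $(C|C_0)_{K_{i+1}}=+1$, so at most one of the two walls can satisfy $(C|C_0)=+1$, yielding the contradiction. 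The main obstacle, which I expect to require the most care, is precisely this last step: giving a clean proof that the polar flag of Remark \ref{generalext} places $V_0$ on one prescribed extremal side of every pencil $[H_k]_p$, uniformly in $p$. I expect to argue this by induction on $k$, leveraging the recursive character of $\pol$, the nested flag $V_0\subset V_1\subset\cdots\subset V_d$, and the supersolvable filtration, together with Lemma \ref{lem:opp} to transfer the extremality from the $(k-1)$-critical cell $[\op_{\breve p}(C.\breve p)\prec\breve p]$ to the cell $[C\prec p]$.
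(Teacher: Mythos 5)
Your opening move coincides with the paper's: supersolvability produces a hyperplane $H''\in\AA_{i_k-1}$ with $H\cap H'\subseteq H''$ (the paper takes $\overline{H}\in[H_{k-1}]_p$ with $H\cap H'\subset\overline{H}$), and your sub-claim that $H''\in\AA_{\breve p}$, so that the members of $[H_k]_p$ do not cross inside $\widetilde{C}.\breve p$, is consistent with Lemma \ref{lem:sottofaccia} (which gives $[H_j]_{\breve p}=[H_j]_p$ for $j\le k-1$). But after this point the two arguments diverge sharply. The paper finishes in one local step: since $\rank(H\cap H'\cap\overline{H})=2$, the three hyperplanes all lie in the rank-two localization at $H\cap H'$, where a chamber has exactly two walls, and it concludes directly that $H$, $H'$ and $\overline{H}$ cannot all be walls of the one chamber $C$. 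No ordering of the full pencil $[H_k]_p$ and no analysis of where $C_0$ sits relative to it is ever invoked.

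The genuine gap in your proposal is the step you yourself flag as the hard one: the claim that the flag of Remark \ref{generalext} places $C_0$ on one prescribed extremal side of \emph{every} hyperplane of the pencil $[H_k]_p$, uniformly in $p$. This is the only step that actually produces the contradiction, and it is not proved --- it is announced as ``I expect to argue this by induction on $k$.'' Moreover it is a substantially stronger statement than the lemma requires: it would simultaneously re-derive the existence statement of Lemma \ref{lem:muro} and identify the wall explicitly as the next hyperplane $K_{i+1}$ in the chain, i.e.\ it amounts to a full combinatorial description of the critical cells over $p$, which is precisely the kind of global flag analysis the paper's local rank-two argument is designed to avoid. As written, then, the proposal is an incomplete plan rather than a proof: steps (1)--(4) are sound but only set the stage, and the decisive step (5) is missing. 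If you want to salvage your approach, note that the local version of your step (5) is all you need: for the two walls $K_i,K_{i+1}$ of $C=S_i$, localize at $K_i\cap K_{i+1}\subseteq H''$ and observe that if neither wall separated $C$ from $C_0$ then the localized chambers of $C$ and $C_0$ at this rank-two flat would coincide; this is exactly the configuration the paper rules out, and it requires no global statement about $V_0$.
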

\begin{proof} Let $[C \prec p] \in \textup{Crit}_k(\SS)$ be a $k$-critical cell, $[p]=([H_1]_p, \dots, [H_k]_p)$, and $H,H' \in [H_k]_p$ two hyperplanes satisfying Lemma \ref{lem:muro}. Then, by supersolvability, it exists $\overline{H} \in [H_{k-1}]_p$ such that $H \cap H' \subset \overline{H}$. It follows that $\rank(H \cap H' \cap  \overline{H})=2$ and hence $H,H'$ and $\overline{H}$ cannot be walls of the same chamber $C$.  
\end{proof}

\begin{Theorem}\label{th:map} The relation $f_k$ defined in \eqref{df:nablak} describes a bijection between $\textup{Crit}_k(\SS)$ and $\nbc_k$. 
\end{Theorem}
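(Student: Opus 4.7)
The plan is to verify well-definedness of $f_k$ and then reduce bijectivity to injectivity via the cardinality identity $|\textup{Crit}_k(\SS)| = b_k = |\nbc_k|$, which follows from minimality of $\SS$ and the bijection $\eta$ of \eqref{eq:eta}.

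For well-definedness, the output $(H_1,\ldots,H_k)$ lies in $\nbc_k$ automatically, since the classes $[H_j]_p$ reside in strictly increasing layers $\AA_{i_j}\setminus\AA_{i_j-1}$ by property \eqref{eq:propord} and the structure of $[p]$. The min/max in (ii), (iii) exist once the extremized sets are shown non-empty: for $j = k$ this is Lemma \ref{lem:muro}; for $j < k$ it follows from the fact that $[H_j]_p$ forms a pencil about a codim-$j$ stratum and $C \neq C_0$ necessarily lies on opposite sides of at least one of its members. For $j = k$, the $\pol$-max in (iii) agrees with the unique wall from Lemmas \ref{lem:muro} and \ref{lem:unicmuro}: a strictly $\pol$-larger element of $\{H\in[H_k]_p : (C|C_0)_H = 1\}$ would, by supersolvability and \eqref{eq:propord}, force a second wall of $C$ in $[H_k]_p$, contradicting uniqueness.

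For injectivity, suppose $f_k([C\prec p]) = f_k([C'\prec p']) = (H_1,\ldots,H_k)$. From $\bigcap_i H_i = |p| = |p'|$ we get $p = p'$. I would proceed by induction on $k$ using Lemmas \ref{lem:sottofaccia} and \ref{lem:opp}: at the $(k-1)$-critical face $\breve{p}$ with $[H_j]_{\breve{p}} = [H_j]_p$ for $j < k$, the cells $[\op_{\breve{p}}(C.\breve{p}) \prec \breve{p}]$ and $[\op_{\breve{p}}(C'.\breve{p}) \prec \breve{p}]$ are both $(k-1)$-critical. For $H \in \AA_{\breve{p}}$, the sign $(C|C_0)_H$ flips to $-(\op_{\breve{p}}(C.\breve{p})|C_0)_H$, while the parity $(-1)^{k-j}$ also flips between $f_k$ and $f_{k-1}$; a bookkeeping argument reduces the extremum conditions defining $f_k$ to those defining $f_{k-1}$, yielding $f_{k-1}([\op_{\breve{p}}(C.\breve{p}) \prec \breve{p}]) = f_{k-1}([\op_{\breve{p}}(C'.\breve{p}) \prec \breve{p}])$. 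The inductive hypothesis then gives $\op_{\breve{p}}(C.\breve{p}) = \op_{\breve{p}}(C'.\breve{p})$, hence $C.\breve{p} = C'.\breve{p}$, and condition (iv) combined with criticality at $p$ forces $C = C'$.

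The main obstacle is the bookkeeping step reconciling $f_k$ and $f_{k-1}$ across $\op_{\breve{p}}$: the combined effect of the parity flip and the sign flip interchanges the condition ``$\pol$-min of $\{H\in[H_j]_p:(C|C_0)_H=-1\}$'' with ``$\pol$-max of $\{H\in[H_j]_{\breve{p}}:(\op_{\breve{p}}(C.\breve{p})|C_0)_H=1\}$'', which are min and max of the \emph{same} subset of $[H_j]_{\breve{p}}$. For these to determine the same extremal datum, one needs the signs of $(C|C_0)_{\cdot}$ to be monotone along $\pol$ within each pencil $[H_j]_p$; establishing this monotonicity, through the recursive structure of $\pol$ supplied by Theorem \ref{ssfol}, is the key technical input that closes the induction.
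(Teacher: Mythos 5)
Your architecture coincides with the paper's: induction on $k$, surjectivity obtained from the cardinality identity $\sharp\,\textup{Crit}_k(\SS)=b_k=\sharp\,\nbc_k$, injectivity by descending to the $(k-1)$-critical cell $[\op_{\breve{p}}(C.\breve{p})\prec\breve{p}]$ supplied by Lemmas \ref{lem:sottofaccia} and \ref{lem:opp}, and the unique-wall Lemmas \ref{lem:muro} and \ref{lem:unicmuro} to recover the last entry. The difficulty is that the step you yourself single out as ``the main obstacle'' is exactly where the paper's proof does its work, and you have not closed it. The paper's resolution is the chain of identities
$$\min_{\pol}\{ H \in [H_j]_p \mid (C|C_0)_H=-1\}=\min_{\pol}\{ H \in [H_j]_p \mid (C.\breve{p}|C_0)_H=-1\}=\max_{\pol}\{ H \in [H_j]_p \mid (\op_{\breve{p}}(C.\breve{p})|C_0)_H=1\}$$
(and its mirror image for the other parity), which rests on three facts: $[H_j]_{\breve{p}}=[H_j]_p$ for $j<k$, so the whole pencil lies in $\AA_{\breve{p}}$ (Lemma \ref{lem:sottofaccia}); $C$ and $C.\breve{p}$ lie in the same chamber of $\AA_{\breve{p}}$, so their separation data agree on the pencil; and $C.\breve{p}$, $\op_{\breve{p}}(C.\breve{p})$ are opposite with respect to $\AA_{\breve{p}}$, so the signs flip there. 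This identity is what simultaneously gives well-definedness of the first $k-1$ entries (they are \emph{inherited} from $f_{k-1}$ by induction, so the paper never needs your ad hoc non-emptiness argument for $j<k$ --- which, as written, only addresses the sets with sign $-1$ and not the $\max$ conditions) and the transfer of the injectivity hypothesis down to level $k-1$.

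Moreover, the mechanism you propose for closing the gap --- monotonicity of the signs $(C|C_0)_{\cdot}$ along $\pol$ within each pencil --- cannot by itself do the job. As you correctly observe, after combining the sign flip across $\op_{\breve{p}}$ with the parity flip from $(-1)^{k-j}$ to $(-1)^{(k-1)-j}$, one is comparing the $\pol$-minimum and the $\pol$-maximum of one and the same subset of $[H_j]_p$; no monotonicity statement makes the minimum and maximum of a set coincide unless that set is reduced to a single element. What actually has to be established is precisely the displayed identity above (equivalently, that for a critical cell the extremal datum in each pencil $[H_j]_p$, $j<k$, is unambiguous), and this does not follow from Theorem \ref{ssfol} alone. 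So your proposal reproduces the paper's skeleton faithfully, but leaves its load-bearing step as an acknowledged placeholder and suggests a repair that is insufficient as stated.
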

\begin{proof} We will prove the theorem by induction on the dimension $k$ of the critical cells in $\textup{Crit}(\SS)$. The theorem holds trivially for the $0$-critical cell that corresponds to the empty set.  

Let $[C \prec p]$ be a $k$-critical cell. Then, by Lemma \ref{lem:opp}, the $(k-1)$-cell $[\op_{\breve{p}}(C.\breve{p})\prec \breve{p}]$ is critical and, by inductive hypothesis, it exists one and only one $(k-1)$-uple of hyperplanes  $(H_{1},\ldots,H_{k-1}) \in \nbc_{k-1}$ such that  
$(H_{1},\ldots,H_{k-1})=f_{k-1}([\op_{\breve{p}}(C.\breve{p})\prec \breve{p}])$. 
Moreover, since $C$ and $C.\breve{p}$ belong to the same chamber $\widetilde{C}.\breve{p}$ of the arrangement $\AA_{\breve{p}}$ and $C.\breve{p}$ and $\op_{\breve{p}}(C.\breve{p})$ are opposite chambers with respect to $\breve{p}$, it follows that 
$$\min_{\pol}\{ H \in [H_j]_p ~|~ (C~|~C_0)_H=-1 \}=$$
$$\min_{\pol}\{ H \in [H_j]_p ~|~ (C.\breve{p}~|~C_0)_H=-1 \}=$$
$$\max_{\pol}\{ H \in [H_j]_p ~|~ (\op_{\breve{p}}(C.\breve{p})~|~C_0)_H=1 \},$$
 for $j=1,\dots, k-1$ and, analogously,
$$\max_{\pol}\{ H \in [H_j]_p ~|~ (C~|~C_0)_H=1 \}=$$
$$\max_{\pol}\{ H \in [H_j]_p ~|~ (C.\breve{p}~|~C_0)_H=1 \}=$$
$$\min_{\pol}\{ H \in [H_j]_p ~|~ (\op_{\breve{p}}(C.\breve{p})~|~C_0)_H=-1 \},$$
for  $j=1,\ldots, k-1$.

Then, if $H \in [H_{k}]_{p}$ is the only hyperplane satisfying Lemma \ref{lem:muro}, the $k$-uple $(H_{1},\ldots,H_{k-1}, H)$ satisfies all conditions of \eqref{df:nablak} and it is clearly the only element in $f_k([C \prec p])$, that is $f_k$ is a map. 

We need to verify that $f_k$ is bijective. Since $\nbc_k$ and $\textup{Crit}_k(\SS)$ are finite sets of the same cardinality, it is enough to show that $f_k$ is injective. Let  $[C' \prec p]$ be a $k$-critical cell with $f_k([C' \prec p])=f_k([C \prec p])$. Then, by inductive hypothesis, $\op_{\breve{p}}(C.\breve{p})=\op_{\breve{p}}(C'.\breve{p})$, that is $C.\breve{p}=C'.\breve{p}$ and hence $\widetilde{C}.\breve{p} = \widetilde{C}'.\breve{p}$. By Lemmas \ref{lem:muro} and \ref{lem:unicmuro} there is only one hyperplane $H' \in [H_k]_{p}$ satisfying condition (iv) in the definition of the map $f_k$, that is $C=C'$ and $f_k$ is injective.
\end{proof}
As immediate corollaries we get the following results.

\begin{Corollary}\label{correspcelluple} The map $f$ defined in Section \ref{sec:main} is a bijection.
\end{Corollary}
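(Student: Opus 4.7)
The plan is to deduce the statement directly from Theorem \ref{th:map}. First I would record the two decompositions that are already fixed in the paper: by the Notation in Section \ref{sec:pre} one has $\textup{Crit}(\SS)=\bigsqcup_{k}\textup{Crit}_k(\SS)$, and by the convention adopted just after the \textit{nbc}-basis proposition of Section \ref{sec:nbcdef} one has $\nbc=\bigsqcup_{k}\nbc_k$. Both are genuine disjoint unions because $\textup{Crit}_k(\SS)$ consists of cells of a fixed codimension $k$, while $\nbc_k$ consists of ordered tuples of a fixed length $k$.

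Next I would check that the relation $f$ introduced in (\ref{df:nabla}) is precisely the amalgamation of the relations $f_k$ from (\ref{df:nablak}). This is immediate from the definition: for a critical cell $[C\prec p]\in\textup{Crit}_k(\SS)$ the defining conditions (i)--(iv) involve only the codimension $k$ of $p$, so any element of $f([C\prec p])$ is a $k$-tuple satisfying exactly the conditions imposed by $f_k$, and vice versa. Hence $f\vert_{\textup{Crit}_k(\SS)}=f_k$, viewed as a relation into $\nbc_k\subset\nbc$.

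With these two observations in place, the corollary follows from the elementary fact that a relation between disjoint unions which restricts on each matching pair of components to a bijection is itself a (well-defined) bijection. By Theorem \ref{th:map}, each $f_k\colon\textup{Crit}_k(\SS)\to\nbc_k$ is a bijection, so $f$ is single-valued and injective on each $\textup{Crit}_k(\SS)$, distinct components have disjoint images sitting in distinct $\nbc_k$, and every element of $\nbc$ lies in the image of some $f_k$, hence of $f$. No serious obstacle is expected: the entire content of the corollary is the level-wise result already established, and the argument is purely a bookkeeping assembly of the pieces.
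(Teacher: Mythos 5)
Your proof is correct and follows exactly the route the paper intends: the paper states this corollary as an immediate consequence of Theorem \ref{th:map}, with $f$ being the amalgamation of the level-wise bijections $f_k$ over the disjoint decompositions $\textup{Crit}(\SS)=\cup_k\textup{Crit}_k(\SS)$ and $\nbc=\cup_k\nbc_k$. Your bookkeeping assembly is precisely the implicit argument, just written out in full.
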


\begin{Corollary} The map $\eta^{-1}f$ is a bijection between $\ch(\AA)$ and $\nbc$.
\end{Corollary}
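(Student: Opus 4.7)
The plan is to observe that the statement is essentially the composition of two already established bijections. By the definition recalled in equation~(\ref{eq:eta}) and the reference to \cite{salvettisette}, the map $\eta\colon \textup{Crit}(\SS)\to\ch(\AA)$ sending $[C\prec p]\mapsto\op_p(C)$ is a bijection; in particular its inverse $\eta^{-1}\colon \ch(\AA)\to\textup{Crit}(\SS)$ is well-defined and bijective. On the other hand, Corollary~\ref{correspcelluple} (itself a direct consequence of Theorem~\ref{th:map}, obtained by assembling the bijections $f_k$ in each degree) establishes that $f\colon \textup{Crit}(\SS)\to\nbc$ is a bijection.

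First I would clarify the notation: by $\eta^{-1}f$ we mean the composition $f\circ \eta^{-1}\colon \ch(\AA)\to\textup{Crit}(\SS)\to\nbc$, which is the only composition with the correct source and target. Then the conclusion is immediate, since the composition of two bijections is a bijection, and both the domains and codomains match (the intermediate set $\textup{Crit}(\SS)$ is the common one). Concretely, given $D\in\ch(\AA)$, one has a unique critical cell $[C\prec p]=\eta^{-1}(D)$, namely the cell with $D=\op_p(C)$; then $f([C\prec p])$ is the uniquely determined element of $\nbc$ produced by Theorem~\ref{th:map}.

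There is no real obstacle here: the content of the corollary lies entirely in the two nontrivial statements that $\eta$ and $f$ are bijections, both of which have already been proved. I would therefore present the proof as a two-line argument, perhaps also remarking that this composed bijection provides the natural combinatorial description of the bijection between chambers and the $\nbc$ basis announced in the introduction, which will be exploited in Sections~\ref{sec:mil} and~\ref{sec:braid}.
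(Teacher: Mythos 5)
Your proposal is correct and matches the paper's (implicit) argument: the corollary is stated as an immediate consequence of Theorem \ref{th:map} and the bijectivity of $\eta$ from \cite{salvettisette}, i.e.\ it is just the composition of two established bijections, exactly as you describe. Your remark that $\eta^{-1}f$ must be read as $f\circ\eta^{-1}$ to have the stated source and target is a fair clarification and does not change the substance.
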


\begin{Remark} In the construction of the map $f_k$, we used properties of supersolvable arrangements, but the main argument behind its description and construction is merely a geometrical one. In fact, the map could be given in general for any complexified arrangement analogously to what has been done by other authors, such as Yoshinaga in \cite{yoshinaga2009chamber}, Delucchi in \cite{Del2008} and Gioan and Las Vergnas in \cite{GioLa}. The interest of this map is its handy and natural description that allows applications such as the one in the subsequent Sections. A natural question is to which extent and how this map can be generalized without losing its simple description. A partial answer about the non triviality of this question is given by the following counterexample.
\end{Remark}

\subsection{Nice arrangements} 
A natural generalization of the notion of supersolvable arrangements is the one of  nice arrangements introduced by Terao in \cite{terao1992factorizations}. See also \cite{orlterao}.

Fix an arrangement $\AA$ in $\mathbb{R}^d$. A partition $\pi=(\pi_1,\dots,\pi_s)$ of $\AA$ is called \emph{independent} if for any $H_i\in\pi_i\subset\AA$,  the hyperplanes $H_1,\dots,H_s$ are independent, i.e. $\rank(H_1\cap\cdots\cap H_s)=s$.

Consider now $X\in L(\AA)$ and $\pi=(\pi_1,\dots,\pi_s)$ a partition of $\AA$. Then the \emph{induced partition} $\pi_X$ is a partition of the arrangement $\AA_X$ whose blocks are the subsets $\pi_i\cap\AA_X$, for $i=1,\dots,s$, which are not empty. 

\begin{Definition}\label{nice} A partition $\pi=(\pi_1,\dots,\pi_s)$ of $\AA$ is called \emph{nice} if
\begin{enumerate}
\item $\pi$ is independent;
\item for any $X\in L(\AA)$, the induced partition $\pi_X$ contains a block which is a singleton unless $\AA_X=\emptyset$.
\end{enumerate}
We will call $\AA$ a \emph{nice arrangement} if it admits a nice partition.
\end{Definition}
A supersolvable arrangement $\AA$ is a nice arrangement with $s=d-1$ and $\pi_i=\AA_{i+1}\setminus \AA_i$.

Nice arrangements have been introduced by Terao since they answered the question of which arrangements have their Orlik-Solomon algebra factorizable. In particular,  $\pi=(\pi_1,\dots,\pi_s)$ is a nice partition of $\AA$ if and only if the Orlik-Solomon algebra of $\AA$, viewed as $\mathbb{Z}$-module factorizes as
$$A(\AA)=(\mathbb{Z}\oplus B(\pi_1))\otimes\cdots\otimes (\mathbb{Z}\oplus B(\pi_s)),$$
where $B(\pi_i)$ denotes the submodule of $A^1(\AA)$ spanned by the hyperplanes in $\pi_i$.

\begin{figure}[htbp]
\begin{picture}(90,90)(0,0)
\thicklines

\put(10,0){\line(1,1){90}}
\put(-5,65){\line(1,0){100}}
\put(45,0){\line(0,1){90}}
\put(80,0){\line(-1,1){90}}

{\small 
\put(4,10){$H_2$}
\put(70,10){$H'_2$}
\put(0,55){$H_1$}
\put(46,0){$H_3$}
\put(80,35){$C_0$}
\put(46,50){$C_1$}
\put(31,50){$C_2$}
}
\end{picture}
      \caption{Nice arrangement.}
\label{nicearr}
\end{figure}
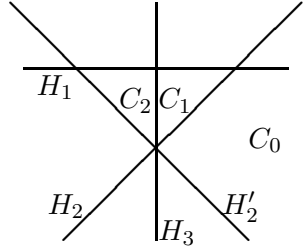

\begin{Example} Let us consider the arrangement $\AA$ described in Figure \ref{nicearr}. The cone $c\AA$ over $\AA$ is supersolvable and  hence nice. Consider the partition defined by $\pi_1:=\{H_1\}, \pi_2:=\{H_2,H'_2\}$ and $\pi_3:=\{H_3\}$. This partition is nice, but $\pi_1 \subset \pi_1 \cup \pi_2$ is not supersolvable arrangement, that is the partition $\pi=(\pi_1,\pi_2,\pi_3)$ is not compatible with supersolvable structure of $c\AA$.
If we replace the $\nbc$ basis obtained by supersolvable filtration $\AA=\{H_1,H_2,H_2',H_3\} \supset \AA_2=\{H_1,H_2\} \supset \{H_1\}$ with the one obtained using partition $\pi$, the map $f_2$ defined in \ref{df:nablak} should associate to the $2$-critical cell $[C_2 \prec p]$ a $3$-uple of hyperplanes with $H_3$ as last entry since $\pi_3:=\{H_3\}$. But this clearly does not satisfy condition (iv) in  the definition of the map $f_k$.  
\end{Example}

Let us remark that in the above Example no recursive order is compatible with the nice partition $\pi$.

\section{On the (co)homology group of the Milnor fiber}\label{sec:mil}

In this Section we present an application of the map constructed in Section \ref{sec:main} to computations on the first (co)homology group of the Milnor fiber of supersolvable arrangements.

\subsection{Boundary map} In \cite{salvettisette}, the authors described a boundary operator $\partial_*$ such that the minimal complex  $(\mathcal{C}(\SS)_*,\partial_*)$, built from $\textup{Crit}_*(\SS)$, computes the homology groups with local coefficients $H_k(M(\mathcal A),\mathbb{L})$. In particular, in degree $k$ $$\mathcal{C}(\SS)_k=\mathcal{C}_k:=\mathbb{L}.e_{[C\prec p^k]},$$ where one has one generator for each $[C\prec p^k]\in\textup{Crit}_k(\SS)$.

A $(k-1)$-critical face $[D\prec p^{k-1}]$ is in the boundary of a $k$-critical face $[C\prec p^k]$ if and only if it exists an \textit{ordered admissible sequence} for the pair $[C\prec p^k], [D\prec p^{k-1}]$. More in detail, given $p^k$ a critical facet of codimension $k$, a sequence of pairwise different facets of codimension $k-1$
$$\mathcal F(p^k) \ :=\ (F^{(k-1)}_{i_1},\cdots,F^{(k-1)}_{i_m}), \ m\geq 1$$
such that
$$F^{(k-1)}_{i_j}\prec p^{k},\ \forall \ j$$
and
$$p^k\pol F^{(k-1)}_{i_j}\ \text{for}\ j<m$$
while for the last element
$$F^{(k-1)}_{i_m}\pol p^k$$
is called an  \emph{admissible $k$-sequence}.\\
Furthermore, it is called an \emph{ordered} admissible $k$-sequence if
$$F^{(k-1)}_{i_1}\pol\cdots \pol F^{(k-1)}_{i_{m-1}}.$$
Two admissible $k$-sequences
$$\mathcal F(p^k) \ :=\ (F^{(k-1)}_{i_1},\cdots,F^{(k-1)}_{i_m})$$
$$\mathcal F(p'^k) \ :=\ (F'^{(k-1)}_{j_1},\cdots,F'^{(k-1)}_{j_l})$$
$p^k\neq p'^k,$ can be {\it composed} into a sequence
$$\mathcal F(p^k)\mathcal F(p'^k) \ :=\
(F^{(k-1)}_{i_1},\cdots,F^{(k-1)}_{i_m},
F'^{(k-1)}_{j_1},\cdots,F'^{(k-1)}_{j_l})$$ if
$$F^{(k-1)}_{i_m}\prec p'^k.$$

Given a critical $k$-cell  $[C\prec p^k]$ and a critical $(k-1)$-cell  $[D\prec p^{k-1}]$ \ an \textit{ordered admissible sequence} for the given pair of critical cells is a sequence of facets of codimension $k-1$
$$\ \mathcal F_{([C\prec p^k],\ [D\prec p^{k-1}])} :=\ (F^{(k-1)}_{i_1},\cdots,F^{(k-1)}_{i_h})$$
obtained as composition of ordered admissible $k$-sequences
$$\mathcal F(p^k_{j_1})\cdots \mathcal F(p^k_{j_s})$$
such that
\medskip

a) \ $p^k_{j_1}=p^k$ \ (so $F^{(k-1)}_{i_1}\prec\ p^{k}$);

b) \ $F^{(k-1)}_{i_h}\ =\ p^{k-1}$ \ and the chamber
$$C.F^{(k-1)}_{i_1}.\cdots.F^{(k-1)}_{i_h}$$
(see Notation \ref{notaz:critfac}) equals \ $D$;

c) \ for all $j=1,\dots, h$ \ the $(k-1)$-cell \
$$[C.F^{(k-1)}_{i_1}.\cdots.F^{(k-1)}_{i_j}\prec F^{(k-1)}_{i_j}]$$
is locally critical, that is $$F^{(k-1)}_{i_j}=\max_{\pol}\{F'~|~C.F^{(k-1)}_{i_1}.\cdots.F^{(k-1)}_{i_j}\prec F'\prec F^{(k-1)}_{i_j}\}.$$

Denote by
$${\mathcal Seq}\ =\ {\mathcal Seq}([C\prec p^k],[D\prec p^{(k-1)}])$$
the set of all admissible sequences for the given pair of critical
cells. 

Of course, this is a finite set which is determined only by the orderings $\prec$ and  $\pol.$ In fact, the ``operation'' which
associates to a chamber $C$ and a facet $p$ the chamber $C.p$ is detected only by the Hasse diagram of the partial ordering $\prec.$

In addition, if $\AA$ is supersolvable, Theorem \ref{ssfol} holds and $\AA$ is recursively orderable.
The order $\pol$ can be chosen so that
for all $i=2,\ldots,d$ and all $k=1,\ldots, i-1$, if  $p_1\in\PP^k(\AA_{i-1})$ and  $p_2\in\PP^k(\AA_{i}) \setminus \PP^k(\AA_{i-1})$  lie in the support of the same $(k+1)$-codimensional face,
then $p_1 \pol p_2$.

From these considerations it follows that, if a $k$-critical cell $[C\prec p^k]$ is such that $p_k\in\PP^k(\AA_{i-1})$, then all $(k-1)$-facets $F^{k-1}_{i_j}$ that belong to ordered admissible sequences originating from $[C\prec p^k]$ are in $\FF^{k-1}(\AA_{i-1})$. Indeed all facets in $\FF(p^k)$ clearly belong to $\FF^{k-1}(\AA_{i-1})$. Moreover, from the conditions
$F^{(k-1)}_{i_m}\pol p^k$, it follows that if $\FF(p'^k)$ is a sequence composed with $\FF(p^k)$, then by definition, $p'^k \pol F^{(k-1)}_{i_m}\pol p^k$ and, from the special choice of the recursive order $\pol$, we get that $p'^k \in \PP^k(\AA_{i-1})$. 

Moreover the coefficients in the local system $\mathbb{L}$ depend only on the monodromy around hyperplanes that separate the chambers $C$ and $D$ and the following statement is proved. 

\begin{Theorem}\label{thm:inclus} If $\AA$ is a supersolvable arrangement, it exists a recursive order $\pol$ such that the inclusion map
$$
i_{h,k}\colon (\mathcal{C}(\SS(\AA_h))_*,\partial_*) \longrightarrow (\mathcal{C}(\SS(\AA_k))_*,\partial_*)
$$
is a well defined inclusion of algebraic complexes, for $h<k$.
\end{Theorem}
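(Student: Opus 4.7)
The plan is to verify that $i_{h,k}$ is well defined on two levels: first on generators, by showing $\textup{Crit}_*(\SS(\AA_h)) \subseteq \textup{Crit}_*(\SS(\AA_k))$ in a canonical way, and second as a chain map, by showing that for any critical cell of $\SS(\AA_h)$, the boundary operator $\partial_*$ of $\SS(\AA_k)$ produces only cells in the image of $\textup{Crit}_*(\SS(\AA_h))$, with matching coefficients in $\mathbb{L}$.

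For the inclusion on generators, the plan is to exploit the bijection $f_k$ of Theorem \ref{th:map}. Fix a recursive order $\pol$ on $\AA_k$ satisfying the property of Theorem \ref{ssfol}. By property \eqref{eq:propord} the order $\pol$ restricts to a recursive order on every sub-arrangement $\AA_h$, and by the description of Section \ref{sec:nbcdef} there is a tautological set-theoretic inclusion $\nbc_*(\AA_h) \hookrightarrow \nbc_*(\AA_k)$ consisting of those tuples whose entries all lie in $\AA_h$. Composing with $f_k^{-1}$ on the $\AA_k$-side and $f_k$ on the $\AA_h$-side, and verifying that this composition is the obvious identification of cells of $\SS(\AA_h)$ with those cells $[C \prec p]$ of $\SS(\AA_k)$ for which $p \in \PP(\AA_h)$ and $C$ is bounded in $V^-_{k-1}(p)$ already as a chamber of $\AA_h$, yields $i_{h,k}$ at the level of free $\mathbb{L}$-modules.

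For the compatibility with $\partial_*$, the plan is to use the explicit description via ordered admissible sequences together with the special property of $\pol$ guaranteed by Theorem \ref{ssfol}. Take a critical cell $[C \prec p^k]$ with $p^k \in \PP^k(\AA_h)$; I must show that every admissible sequence $\mathcal F_{([C\prec p^k], [D\prec p^{k-1}])}$ computed in $\SS(\AA_k)$ consists only of data in $\SS(\AA_h)$. The first block $\FF(p^k)$ is automatically in $\FF^{k-1}(\AA_h)$ since $|p^k|$ lies in $\AA_h$. For any subsequent block $\FF(p'^k)$ glued to $\FF(p^k)$, the composition requirement $F^{(k-1)}_{i_m}\prec p'^k$ together with $F^{(k-1)}_{i_m}\pol p^k$ gives $p'^k \pol F^{(k-1)}_{i_m}\pol p^k$; now the property of $\pol$ from Theorem \ref{ssfol} forces $p'^k$ to still lie in $\PP^k(\AA_h)$, because if $p'^k$ were in $\PP^k(\AA_k)\setminus\PP^k(\AA_h)$ lying in the support of the common $(k+1)$-face then it would come after any $\PP^k(\AA_h)$-face in the same support under $\pol$, contradicting $p'^k \pol p^k$. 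Iterating this argument along the sequence confines every face and every critical codimension-$k$ face appearing to $\PP(\AA_h)$, and the chambers $C.F^{(k-1)}_{i_1}.\cdots.F^{(k-1)}_{i_j}$ stay inside the $\AA_h$-chamber $\widetilde{C}$ containing $C$, so the local criticality condition is the same as that computed in $\SS(\AA_h)$.

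Finally, since the $\mathbb{L}$-coefficient attached to each admissible sequence depends only on the hyperplanes separating the consecutive chambers (all of which are hyperplanes of $\AA_h$ by the previous step), the coefficient of $[D \prec p^{k-1}]$ in $\partial[C \prec p^k]$ is the same whether computed in $\SS(\AA_h)$ or in $\SS(\AA_k)$. This gives $\partial_* \circ i_{h,k} = i_{h,k} \circ \partial_*$, and proves $i_{h,k}$ is an inclusion of algebraic complexes. The main obstacle in the argument is the step confining every $p'^k$ arising in a composed admissible sequence to $\PP^k(\AA_h)$: this is precisely where the fine property of the recursive order from Theorem \ref{ssfol} is essential, and without the carefully chosen order one would expect $\partial[C\prec p^k]$ to involve cells living genuinely in $\AA_k \setminus \AA_h$.
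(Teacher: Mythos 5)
Your proposal is correct and follows essentially the same route as the paper: the key step---using ordered admissible sequences together with the special recursive order of Theorem \ref{ssfol} to force every composed block $\mathcal F(p'^k)$ with $p'^k \pol F^{(k-1)}_{i_m} \pol p^k$ to remain in $\PP^k(\AA_h)$, and then observing that the $\mathbb{L}$-coefficients depend only on the separating hyperplanes---is exactly the paper's argument. The only difference is that you make explicit the generator-level identification of $\textup{Crit}_*(\SS(\AA_h))$ inside $\textup{Crit}_*(\SS(\AA_k))$ via the $\nbc$ bijection, a point the paper leaves implicit.
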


Following construction in \cite{settepanella2009cohomology} (see also \cite{DeProSal}), the inclusion map 
$$
i_{j-1,j}\colon (\mathcal{C}(\SS(\AA_{j-1}))_*,\partial_*) \longrightarrow (\mathcal{C}(\SS(\AA_{j}))_*,\partial_*)
$$
define an exact sequence of algebraic complexes 
\begin{equation}\label{eq:succ1}
0 \rightarrow (\mathcal{C}(\SS(\AA_{j-1}))_*,\partial_*) \rightarrow (\mathcal{C}(\SS(\AA_j))_*,\partial_*) \rightarrow (F^1(\SS(\AA_j)),\partial_*)\rightarrow 0
\end{equation}
where $F^1(\SS(\AA_j))$ denotes the quotient complex $\mathcal{C}(\SS(\AA_{j-1})) / \mathcal{C}(\SS(\AA_{j}))$ with the induced boundary map. It is generated by $k$-uples $(H_1, \ldots H_k) \in \nbc_k(\AA_j)$ such that the last hyperplane $H_k$ belongs to $\AA_j \setminus \AA_{j-1}$, $k=1,\ldots ,j$, that is $F^1(\SS(\AA_j))$ has no $0$-cells and $1$-cells are of the form $(H)$, $H \in \AA_j \setminus \AA_{j-1}$. The short exact sequence in (\ref{eq:succ1}) defines the long one in homology
\begin{equation}\label{eq:longseq}
\begin{split}
\ldots \longrightarrow H_q&(\mathcal{C}(\SS(\AA_{j-1})),\mathbb{L}) \longrightarrow H_q(\mathcal{C}(\SS(\AA_{j})),\mathbb{L}) \longrightarrow \\
& \longrightarrow H_q(F^1(\SS(\AA_j)),\mathbb{L}) \longrightarrow H_{q-1}(\mathcal{C}(\SS(\AA_{j-1})),\mathbb{L}) \longrightarrow \ldots \quad.
\end{split}
\end{equation}

\subsection{First (co)homology group} In \cite{gaiffi2009morse}, the authors gave a simplified formula for the boundary map described in \cite{salvettisette} in the case of line arrangements. In particular, they described a simplified formula for the second boundary map $\partial_2$. Indeed, in order to study the first (co)homology group of the minimal complex $(\mathcal{C}(\SS)_*,\partial_*)$, we can simply refer to the line arrangement obtained intersecting the arrangement $\AA$ with the $2$-dimensional space $V_2$ in the fixed flag of spaces $\{V_k\}_{k=0,\ldots,d}$. From now on we will refer to this line arrangement.

More in details, the formula in \cite{gaiffi2009morse} is defined as follows. For a given $2$-critical cell  $[C \prec p]$, define the following subset of lines
$$
S(p):=\{H \in \AA \mid p \in H\},
$$
and denote with $H_{S(p)}$ (resp. $H^{S(p)}$) the line in $S(p)$ with minimum (resp. maximum) index. There are two lines in $S(p)$ (lines in the section $V_2$) which bound $C$. Denote by $H_C$ (resp. $H^C$) the one which has minimum (resp. maximum) index. Let $\Cone(p)$ be the closed cone bounded by $H_{S(p)}$ and $H^{S(p)}$ having vertex $p$ and whose intersection with $V_1$ is bounded. Then define
\begin{equation*}
\begin{split}
& U(C):=\{H_i \in S(p) \mid i \geq \mbox{ index of } H^C \}\\
& L(C):=\{H_i \in S(p) \mid i \leq \mbox{ index of } H_C \}\\
\end{split}
\end{equation*}
where lines in $U(C)$ are the lines of $S(p)$ which do not separate the chambers $C_0$ and $C$ while the lines in $L(C)$ are the ones which separate $C_0$ from $C$.

Consider a line $V_{p}$ that passes trough the points $V_0$ and $p$. This line intersects all the lines $H\in \AA$  in points $P_H$ and, if $\theta(P_H)$ is the length of the segment $V_0P_H$, then define the set 
$$U(p):=\{H \in \AA \mid \theta(P_H)> \theta(p)\}.$$

In addition, in \cite{gaiffi2009morse} the authors described a rank $1$ local system on the line arrangements as follows. If $\Lbb$ is an abelian local system over $\mathcal{M}(\AA)$ and $V_0$ the basepoint, they fixed a positive orientation and associated an element $t_H \in$ Aut$(\Lbb)$ to each elementary loop turning around the complexified line $H_{\C}$, for all $H \in \AA$. They got a homomorphism
$$\Z[\pi_1(\mathcal{M}(\AA)]\rightarrow\Z[H_1(\mathcal{M}(\AA)]\rightarrow\Z[t_H^{±1}]_{H \in \AA} \subseteq \mbox{End}(\Lbb).$$

With this system of coefficients they described the formula which computes the first local system (co)homology group for the complement of an arrangement $\AA$, as follows.
\begin{equation}\label{eq:boundary}
\begin{split}
&\partial_2(l.e_{[C \prec p]})=\sum_{\substack{ |p_j|\in S(p)}}\bigg(\prod_{\substack{i<j~s.t.\\H_i\in U(p)}}t_{H_i}\bigg)\bigg[\prod_{\substack{i~s.t.\\H_i\in[C\rightarrow|p_j|)}}t_{H_i}-\prod_{\substack{i<j~s.t.\\H_i\in S(p)}}t_{H_i}\bigg](l).e_{[C_{j-1}\prec p_j]}\\
&+\sum_{\substack{|p_j|\in U(p)\\p_j\subset \Cone(p)}}\bigg(\prod_{\substack{i<j~s.t.\\H_i\in U(p)}}t_{H_i}\bigg)\bigg(1-\prod_{\substack{i<j~s.t.\\ H_i\in L(C)}}t_{H_i}\bigg)\bigg(\prod_{\substack{i<j~s.t.\\H_i\in U(C)}}t_{H_i}-\prod_{\substack{i~s.t.\\H_i\in U(C)}}t_{H_i}\bigg)(l).e_{[C_{j-1}\prec p_j]},
\end{split}
\end{equation}
where $l \in \Lbb$, $[C_{j-1} \prec p_j]$ are the $1$-critical cells, $p_j \in H_j$ and $[C\rightarrow |p_j|)$ are the subsets of $S(p)$ defined by
\begin{enumerate}
\item[i)] $[C\rightarrow|p_j|):=\{H_k\in U(C)~|~k<j\}$ if $|p_j|\in U(C)$;
\item[ii)] $[C\rightarrow|p_j|):=\{H_k\in S(p)~|~k<j\}\cup U(C)$ if $|p_j|\in L(C)$.
\end{enumerate}

Furthermore, because the only critical $0$-cell is $[C_0\prec C_0]$, the boundary map $\partial_1$ can be easily computed
\begin{equation*}
\partial_1(l.e_{[C_{i-1}\prec p_i]})=(1-t_{H_i})(l).e_{[C_0\prec C_0]}.
\end{equation*}
 
Notice that computing the (co)homology of the Milnor fiber with integer coefficients is equivalent to set, in the above boundary map, all the elements $t_H \in$ Aut$(\Lbb)$ equal to the same $t\in$ Aut$(\Lbb)$. \\

\subsection{Milnor fiber of supersolvable arrangements} Let us now consider a supersolvable arrangement $\AA$ endowed with the recursive special ordering $\pol$ and a rank $1$ local system $\Lbb_t$ obtained setting $t_H=t$ for all $H \in \AA$. 
We can define the augmented inclusion map
\begin{equation} \label{eq:mappaau}
\begin{split}
i[1]\colon \bigoplus_{H \in \AA_j \setminus \AA_{j-1}}\mathcal{C}(\SS(\AA_1))[1] &\rightarrow  F^1(\SS(\AA_j))\\
\end{split}
\end{equation}
that sends each copy of the only $0$-cell (respectively $1$-cell) in $\mathcal{C}_0(\SS(\AA_1))$ (respectively $\mathcal{C}_1(\SS(\AA_1))$) in the $1$-cell $(H)$ (respectively $2$-cell $(H_1,H)$), $H \in \AA_j \setminus \AA_{j-1}$.

We get the following Theorem.

\begin{Theorem}\label{thm:bprin}With the previous notations, if all hyperplanes in the difference $\AA_j \setminus \AA_{j-1}$ intersect generically the hyperplane in $\AA_1$ the map $i[1]$ is a map of algebraic complexes.
\end{Theorem}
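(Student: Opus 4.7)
Since $F^1(\SS(\AA_j))$ has no $0$-cells (each generator is an $\nbc$-tuple of length $\geq 1$ ending in $\AA_j\setminus\AA_{j-1}$) and $\mathcal{C}(\SS(\AA_1))[1]$ is concentrated in degrees $1$ and $2$, the only commutativity to check is in degree $2\to 1$. As $\AA_1$ is central of rank one, it contains a unique hyperplane $H_1$, and $\mathcal{C}(\SS(\AA_1))$ is generated by $e_{[C_0\prec C_0]}$ and $e_{[C_1\prec p_1]}$ with $\partial_1(e_{[C_1\prec p_1]})=(1-t_{H_1})\,e_{[C_0\prec C_0]}$. Since $i[1]$ sends these to the $1$-cell $(H)$ and the $2$-cell $(H_1,H)$ of $F^1(\SS(\AA_j))$ respectively, the assertion reduces to proving, for every $H\in\AA_j\setminus\AA_{j-1}$,
\[
\partial_2\bigl((H_1,H)\bigr)\;\equiv\;(1-t_{H_1})\,(H) \pmod{\mathcal{C}(\SS(\AA_{j-1}))}.
\]

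First I would identify the $2$-critical cell $[C\prec p]$ corresponding to $(H_1,H)$ under the bijection $f_2$ of Theorem \ref{th:map}. The support of $p$ is $H_1\cap H$, and the generic intersection hypothesis ensures that the corresponding point in $V_2$ lies on no other hyperplane of $\AA_j\cap V_2$, so $S(p)=\{H_1,H\}$. Conditions (ii) and (iv) in the definition of $f_k$ force $C$ to be the chamber at $p$ whose walls are exactly $H_1$ and $H$, with $(C\mid C_0)_H=1$ and $(C\mid C_0)_{H_1}=-1$; consequently $L(C)=\{H_1\}$ and $U(C)=\{H\}$.

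Applying the boundary formula \eqref{eq:boundary} to $e_{[C\prec p]}$, the first sum contains only two summands, for $|p_j|=H_1$ and $|p_j|=H$. The $H_1$ summand yields a generator on a line of $\AA_1\subset\AA_{j-1}$, hence projects to zero in $F^1(\SS(\AA_j))$. For the $H$ summand, the ordering $H_1\pol H$ (property \eqref{eq:propord}) gives $[C\to|p_H|)=\emptyset$, so the bracket collapses to $1-t_{H_1}$; together with the choice of flag (as in Remark \ref{generalext}) arranging the outer product $\prod_{i<\text{index}(H),\,H_i\in U(p)} t_{H_i}$ to be empty, this contributes exactly $(1-t_{H_1})\,(H)$, as required.

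The main obstacle is to show that the second sum in \eqref{eq:boundary} vanishes modulo $\mathcal{C}(\SS(\AA_{j-1}))$. In a summand indexed by $|p_{j'}|\in U(p)$ with $p_{j'}\subset\Cone(p)$, the third factor $\prod_{i<j',\,H_i\in U(C)} t_{H_i}-\prod_{i,\,H_i\in U(C)} t_{H_i}$ equals $t_H-t_H=0$ whenever $\text{index}(H)<j'$, so only lines $|p_{j'}|$ of index at most $\text{index}(H)$ can survive. For these I would argue, using the generic intersection hypothesis together with the recursive ordering of Theorem \ref{ssfol} (which places every hyperplane of $\AA_j\setminus\AA_{j-1}$ strictly after every hyperplane of $\AA_{j-1}$), that any surviving $|p_{j'}|$ must lie in $\AA_{j-1}$: a hyperplane $|p_{j'}|\in\AA_j\setminus\AA_{j-1}$ distinct from $H$ cannot have its $1$-critical facet $p_{j'}$ inside the wedge $\Cone(p)$ bounded by $H_1$ and $H$, since $V_1$ can be chosen (consistent with Remark \ref{generalext}) so that points of $\PP^1(\AA_j)\setminus\PP^1(\AA_{j-1})$ avoid this region. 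Once this geometric verification is carried out, every surviving term in the second sum is a chain on $\AA_{j-1}$, hence zero in the quotient, and the chain-map identity follows.
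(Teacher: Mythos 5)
Your proposal follows essentially the same route as the paper: reduce to the single nontrivial degree and verify $i[1]\circ\partial_1((H_1))=(1-t)(H)=\partial_2((H_1,H))$ in the quotient via the explicit boundary formula \eqref{eq:boundary}. In fact the paper's proof is just that two-line computation; it does not discuss the second sum in \eqref{eq:boundary} at all, whereas you correctly identify it as the one place where the argument needs care (terms $e_{[C_{j'-1}\prec p_{j'}]}$ with $H_{j'}\in\AA_j\setminus\AA_{j-1}$ and $j'<\operatorname{index}(H)$ could a priori survive in $F^1(\SS(\AA_j))$). Your sketch of why those terms vanish --- the third factor kills $j'>\operatorname{index}(H)$, and the flag of Remark \ref{generalext} keeps the critical points of $\AA_j\setminus\AA_{j-1}$ out of $\Cone(p)$ --- is left as a promissory note rather than carried out, but this is a gap the paper shares rather than one you introduce; completing that geometric verification would make your write-up strictly more rigorous than the published proof.
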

\begin{proof} It is enough to show that $i[1]$ commutes with the boundary operator. For any $H \in \AA_j \setminus \AA_{j-1}$, we have
$$i[1]\circ\partial_1((H_1))=i[1]((1-t)()=(1-t)H$$
where $()$ stands for the $0$-cell of $\mathcal{C}(\SS(\AA_1))$ and, using formula in (\ref{eq:boundary}),
$$\partial_2\circ i[1]((H_1))=\partial_2((H_1,H))=(1-t)H$$
\end{proof}

Let us now consider the case of coefficients in the rank 1 local system $\Lbb_{t,\Q}$ obtained by replacing $\Z[t,t^{-1}]$ with  $\Q[t,t^{-1}]$. In this case there is no $\Z$ torsion and, as  $\Q[t,t^{-1}]$ is principal ideal domain, it is known that
$$H_1(\mathcal{C}(\SS(\AA_1)),\Lbb_{t,\Q}) \simeq \bigoplus_{n \mid \sharp \AA}[\Q[t,t^{-1}]/\varphi_n]^{\beta_n}$$
where each $\beta_n \geq 0$ and $\varphi_n$ are the cyclotomic polynomial of degree $n$.
An immediate consequence of Theorem \ref{thm:bprin} is that, as $H_0(\mathcal{C}(\SS(\AA_1)),\Lbb_{t,\Q}) \simeq \Q[t,t^{-1}]/(1-t) \simeq \Q$ then  $H_1(F^1(\SS(\AA_j)) \simeq \Q^{b_j}$, $b_j \leq \sharp (\AA_j \setminus \AA_{j-1})$. This comes directly from remarks on the long exact sequence in homology induced by inclusion $i[1]$. Indeed the quotient complex $$F^1(\SS(\AA_j)) /  \bigoplus_{H \in \AA_j \setminus \AA_{j-1}}\mathcal{C}(\SS(\AA_1))[1]$$ has no $1$-cells and the las part of the long exact sequence in homology becomes

\begin{equation*}
\ldots \longrightarrow H_0(\mathcal{C}(\SS(\AA_{1})),\mathbb{L}_{t,\Q}) \longrightarrow H_1(F^1(\SS(\AA_j)),\mathbb{L}_{t,\Q}) \longrightarrow 0 \quad .
\end{equation*}

Moreover, if the cardinalities of $\AA_{j-1}$ and $\AA_j$ are coprime, the long exact sequence in homology induced by the exact sequence in (\ref{eq:succ1}) splits into short exact sequences of the form
$$0 \rightarrow H_q(\mathcal{C}(\SS(\AA_j)),\Lbb_t)\rightarrow H_q(F^1(\SS(\AA_j)),\Lbb_t)\rightarrow H_{q-1}(\mathcal{C}(\SS(\AA_{j-1})),\Lbb_t)\rightarrow 0$$
and the following Theorem holds.

\begin{Theorem}\label{th:bounmain} Let $\AA =\AA_d \supset \ldots \supset \AA_1$ be a supersolvable arrangement in $\R^d$. If it exists an index  $1 \leq j \leq d$ such that the cardinalities of $\AA_{j-1}$ and $\AA_j$ are coprime and all hyperplanes in $\AA_j \setminus \AA_{j-1}$ intersect generically the hyperplane in $\AA_1$, then  $H_1(\mathcal{C}(\SS(\AA_j)),\Lbb_{t,\Q}) \simeq \Q^{b_j}$, $b_j \leq \sharp (\AA_j \setminus \AA_{j-1})$.
\end{Theorem}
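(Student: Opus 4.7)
The plan is to combine the two preparatory results worked out immediately before the theorem: the computation of $H_1(F^1(\SS(\AA_j)), \Lbb_{t,\Q})$ made possible by the augmented inclusion $i[1]$ of (\ref{eq:mappaau}) under the genericity hypothesis, together with the splitting of the long exact sequence (\ref{eq:longseq}) forced by coprimality.

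First, I would invoke Theorem \ref{thm:bprin}. The genericity hypothesis ensures that $i[1]$ is a chain map, and by construction its image in $F^1(\SS(\AA_j))$ exhausts the $1$-cells (one for each $H \in \AA_j \setminus \AA_{j-1}$). Consequently the cokernel complex $Q_* := F^1(\SS(\AA_j))/\mathrm{Im}(i[1])$ has no generators in degree $\leq 1$, so $H_1(Q_*, \Lbb_{t,\Q}) = H_0(Q_*, \Lbb_{t,\Q}) = 0$. The long exact sequence associated to
\begin{equation*}
0 \longrightarrow \bigoplus_{H \in \AA_j \setminus \AA_{j-1}} \mathcal{C}(\SS(\AA_1))[1] \stackrel{i[1]}{\longrightarrow} F^1(\SS(\AA_j)) \longrightarrow Q_* \longrightarrow 0
\end{equation*}
then ends with a surjection from $\bigoplus_{H \in \AA_j \setminus \AA_{j-1}} H_0(\mathcal{C}(\SS(\AA_1)), \Lbb_{t,\Q})$ onto $H_1(F^1(\SS(\AA_j)), \Lbb_{t,\Q})$. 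Since $H_0(\mathcal{C}(\SS(\AA_1)), \Lbb_{t,\Q}) \simeq \Q[t,t^{-1}]/(1-t) \simeq \Q$, I conclude $H_1(F^1(\SS(\AA_j)), \Lbb_{t,\Q}) \simeq \Q^{b_j}$ with $b_j \leq \#(\AA_j \setminus \AA_{j-1})$.

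Next, I would use the coprimality hypothesis to bring in the short exact sequence stated just before the theorem: $\gcd(\#\AA_{j-1},\#\AA_j) = 1$ makes the annihilator ideals of the cyclic $\Q[t,t^{-1}]$-modules $H_0(\mathcal{C}(\SS(\AA_{i})), \Lbb_{t,\Q})$ (for $i=j-1,j$) coprime up to the common factor $(1-t)$, and this forces the inclusion-induced map $H_{q-1}(\mathcal{C}(\SS(\AA_{j-1})),\Lbb_{t,\Q}) \to H_{q-1}(\mathcal{C}(\SS(\AA_{j})),\Lbb_{t,\Q})$ in (\ref{eq:longseq}) to vanish, whence (\ref{eq:longseq}) splits into
\begin{equation*}
0 \to H_q(\mathcal{C}(\SS(\AA_j)), \Lbb_{t,\Q}) \to H_q(F^1(\SS(\AA_j)), \Lbb_{t,\Q}) \to H_{q-1}(\mathcal{C}(\SS(\AA_{j-1})), \Lbb_{t,\Q}) \to 0.
\end{equation*}
Taking $q=1$, the left arrow gives an injection $H_1(\mathcal{C}(\SS(\AA_j)), \Lbb_{t,\Q}) \hookrightarrow \Q^{b_j}$. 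Since a submodule of $\Q^{b_j}$ over $\Q[t,t^{-1}]$ (where $t$ acts trivially because $(1-t)$ kills the ambient module) is itself of the form $\Q^{b'_j}$ with $b'_j \leq b_j \leq \#(\AA_j \setminus \AA_{j-1})$, the theorem follows.

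The main obstacle I expect is the second step: rigorously justifying that coprimality of the cardinalities forces the vanishing of the inclusion-induced maps in all relevant degrees, so that the long sequence truly cleaves. This is essentially an annihilator-ideal argument over $\Q[t,t^{-1}]$, using that $H_*(\mathcal{C}(\SS(\AA_i)), \Lbb_{t,\Q})$ decomposes according to cyclotomic factors of $t^{\#\AA_i}-1$ and that the chain-level inclusion $\mathcal{C}(\SS(\AA_{j-1})) \hookrightarrow \mathcal{C}(\SS(\AA_j))$ respects this structure. Once that splitting is granted, the first step immediately gives the desired bound.
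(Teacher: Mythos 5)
Your proposal follows essentially the same route as the paper: it first uses Theorem \ref{thm:bprin} and the long exact sequence of the augmented inclusion $i[1]$ (whose cokernel has no cells in degrees $\leq 1$) to obtain $H_1(F^1(\SS(\AA_j)),\Lbb_{t,\Q})\simeq\Q^{b_j}$ as a quotient of $\bigoplus_{H}H_0(\mathcal{C}(\SS(\AA_1)),\Lbb_{t,\Q})\simeq\Q^{\sharp(\AA_j\setminus\AA_{j-1})}$, and then invokes the coprimality-induced splitting of the long exact sequence (\ref{eq:longseq}) to embed $H_1(\mathcal{C}(\SS(\AA_j)),\Lbb_{t,\Q})$ into that group. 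The paper asserts the splitting by reference to the method of \cite{DeProSal} and \cite{settepanella2009cohomology} rather than proving it, so your closing worry about justifying the vanishing of the connecting maps is legitimate but does not mark a divergence from the paper's argument.
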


If $F(\AA)$ is the Milnor fiber associated to $\AA$, then $H_1(\mathcal{C}(\SS(\AA_j)),\Lbb_{t,\Q} \simeq H_1(\mathcal{M}(\AA_j),\Lbb_{t,\Q}) \simeq H_1(F(\AA),\Q)$ and Theorem \ref{th:bounintro} is proved.

Remark that any line arrangement $\AA$ that admits a filtration $\AA_1=\{H_1\} \subset \AA_2 \subset \AA_3$ verifying condition (2) of Definition \ref{supers} can be regarded as the deconing of a supersolvable arrangement $c\AA$ and Theorem \ref{th:bounmain} applies.

Notice that the condition of $H \in \AA_j \setminus \AA_{j-1}$ and $H_1 \in \AA_1$ be normal crossing hyperplanes 
is necessary in order $i[1]$ to be a map of algebraic complexes. But in order to prove Theorem \ref{th:bounmain} one just needs the existence of a map of algebraic complexes with the same image of $i[1]$. Hence a natural question is whether a less restrictive condition would be enough to prove the existence of such a map in order to generalize Theorem \ref{th:bounmain}. 

\section{Braid arrangement}\label{sec:braid}

In this Section, we describe the isomorphism between the symmetric group and the Orlik-Solomon algebra for the braid arrangement $$\AA=\{H_{ij}=\{x_i=x_j\},1\le i<j\le n+1\}.$$

We indicate simply by $A_n$ the symmetric group on $n+1$ elements, acting by permutations of the coordinates. Then $\AA = \AA(A_n)$ is the braid arrangement and $\SS(A_n)$ is the associated CW-complex.

Notice that $\AA$ is a supersolvable arrangement with filtration given by $\AA_1=\{H_{12}\}$ and $\AA_j\setminus\AA_{j-1}=\{H_{1j+1},\dots, H_{jj+1}\}$, for $j=2,\dots,n$.

In \cite{salvettisette}, the authors gave a tableaux description of $\SS(A_n)$ and constructed singular tableaux, that is tableaux corresponding to critical faces.

\subsection{Tableaux description of $\SS(A_n)$ and singular tableux}\label{subsec:tab}
Given a system of coordinates in $\R^{n+1}$ it is possible to describe $\SS(A_n)$ through certain tableaux as follow.

Every $k$-cell $[C \prec F]$ is represented by a tableau with $n+1$ boxes and  $n+1-k$ rows (aligned on the left), filled with all the integers in $\{1,\ldots, n~+~1~\}.$ There is no monotony condition on the lengths of the rows. One has:
\medskip

\ni - $(x_1,\ldots, x_{n+1})$ is a point in $F$ iff:\\

$1.$ $i$ and $j$ belong to the same row iff $x_i=x_j$,

$2.$ $i$ belongs to a row less than the one containing $j$ iff $x_i < x_j$;\\

\ni - the chamber $C$ belongs to the half-space $x_i < x_j$ iff:\\

$1.$ either the row which contains $i$ is less than the one containing $j$ or

$2.$ $i$ and $j$ belong to the same row and the column which contains $i$ is less than the one containing $j$.
\medskip

Notice that the geometrical action of $A_n$ on the stratification induces a natural action on the complex $\SS(A_n),$ which, in terms of
tableaux, is given by a left action of $A_n$: $\sigma. \ T$ is the tableau with the same shape as $T,$ and with entries permuted through $\sigma.$

Denote by $\Tbf(A_n)$ the set of ``row-standard" tableaux, i.e. with entries increasing along each
row. Each face in the stratification $\FF(A_n)$ corresponds to an equivalence class of tableaux, where the equivalence is up to row preserving
permutations. Let $\Tbf^{\mathbf{k}}(A_n)$ be the set of tableaux of dimension $k$
(briefly, $k$-tableaux), i.e. tableaux with exactly $n+1-k$ rows.  Moreover, write $T \prec T'$ iff $F \prec F'$, where the
tableaux $T$ and $T'$ correspond respectively to $F$ and $F'$. 

Define the following operations between tableaux
\begin{enumerate}
\item $T * T^{\prime}$ is the new tableau obtained by attaching vertically $T^{\prime}$ below $T$;
\item $T *_i h$ is the tableau obtained by attaching the one-box tableau with entry $h$ to the $i$-th row of $T$;
\item $T^{op}$ is the tableau obtained from $T$ by reversing the row order.
Notice that $(T * T^{\prime})^{op}=T^{\prime op} * T^{op}$.
\end{enumerate}

Fix $k$ integers $1 < j_1 < \cdots < j_k \leq n+1$ and, for any $1 \leq h \leq k+1$, let $T_h$ be the $0$-tableau (= one-column tableau) with entries $J_h=\{j_{h-1}+1, \dots , j_h-1\}$ in the natural order (set $j_0=0,\ j_{k+1}=k+2$).
Then, for any suitable choice of integers $i_1,\ldots ,i_k$ define a $k$-tableau
\begin{equation}\label{tabdipi}
T^k=((\cdots ((((T_1^{op}*_{i_1} j_1) * T_2)^{op}*_{i_2} j_2) * T_3)^{op} \cdots)^{op}*_{i_k} j_k) * T_{k+1}.
\end{equation}

In \cite{salvettisette}, the authors proved that there exists a system of polar coordinates, generic with respect to $\AA(A_n)$, such that a $k$-facet $p$ is critical if and only if the tableau $T_p$ which represents $p$ is of the form in \eqref{tabdipi}. Moreover the induced order $\pol$ between $k$-critical facets $p$ equals the order between $k$-\textit{critical} tableaux defined as the order induced by lexicographic order between sequences of pairs $((j_1,i_1),\ldots,(j_k,i_k))$, where $(j_t,i_t)<(j'_{t},i'_t)$ if and only if either $j_t<j'_t$ or $j_t=j'_t$ and $i_t>i'_t$.  While a $k$-critical tableau is smaller than an $h$-critical one, with $k \neq h$, if and only if $k <h$. This ordering is special recursive ordering. As no confusion can arise, we will denote this order among critical tableaux by $\pol$.

We will now describe how to attach to each critical cell a $0$-tableau and so to built a bijection between the Orlik-Solomon algebra of $\AA$ and the symmetric group.

\subsection{ Non broken circuits of the symmetric group}
Fix $p$ a $k$-critical face, $[p]=([H_{i_1}]_p, \dots, [H_{i_k}]_p)$. Consider then a $k$-critical cell $[C\prec p]$ with $f_k([C\prec p])=(H'_1,\dots, H'_k)$, where $H'_j\in[H_{i_j}]_p$. Because we are considering the braid arrangement, for any $H\in\AA$ we can write $H=H_{(s,t)}$ for some $1\le s<t\le n+1$. Hence we can write $$([H_{i_1}]_p, \dots, [H_{i_k}]_p)=([H_{(s_{1},t_{1})}]_p, \dots, [H_{(s_{k},t_{k})}]_p)$$ and $$(H'_1,\dots, H'_k)=(H_{(s'_1,t_1)}, \dots, H_{(s'_k,t_k)}).$$ 

Let $T_p$ be the tableau attached to the $k$-critical face $p$. Then $T_p$ is of the form described in (\ref{tabdipi}). Suppose that $T_p$ has rows $(T_p)_1,\dots,(T_p)_{n+1-k}$.

We will describe how to attach to $[C\prec p]$ one and only one $0$-tableaux $T_{[C\prec p]}$ starting from the tableau $T_p$. 
\begin{Definition} Given a tableau $T$, define the map  $$\mathfrak{r}_T\colon \{1,\dots, n+1\}\to\{1,\dots,n+1\}$$ such that $\mathfrak{r}_T(j)$ is the row of $T$ where $j$ is. If $T=T_F$ for a face $F$, we will simply write $\mathfrak{r}_F$ instead of $\mathfrak{r}_{T_F}$.
\end{Definition}

\begin{Remark} By construction, if $H_{s,t}$ is an hyperplane such that  $H_{s,t} \supset p$ then $s$ and $t$ lies in the same row of the tableau $T_p$, that is $\mathfrak{r}_p(s)=\mathfrak{r}_p(t)$, while if  $H_{s,t} \not \supset p$ then $ \mathfrak{r}_p(s)\ne \mathfrak{r}_p(t)$.
Moreover, if $C$ is a chamber, that is a $0$-codimensional facet, then it is represented by a column tableau $T_C$. Let $C_0$ be the base chamber corresponding to the tableau with entry $i$ in the $i$-th row. By construction, if the hyperplane $H_{s,t}$, $s<t$, separates the chambers $C$ and $C_0$ then  $\mathfrak{r}_C(s) > \mathfrak{r}_C(t)$, while $\mathfrak{r}_C(s) < \mathfrak{r}_C(t)$ otherwise. It is easy to see that if $H_{s,t}$ is a wall of $C$, then  $| \mathfrak{r}_C(s) - \mathfrak{r}_C(t) |=1$, that is $s$ and $t$ belong to consecutive rows. 
\end{Remark}
\begin{Definition}\label{defconditionstabl} Consider $f_k([C\prec p])=(H_{(s'_1,t_1)}, \dots, H_{(s'_k,t_k)})$. Then we define an order $<_{[C\prec p]}$ on the set of integers $\{s'_1,\ldots,s'_k,t_1,\ldots,t_k\}$ as follows
\begin{enumerate} 
\item[i)] if $(-1)^{k-j}=1$, then $t_j<_{[C\prec p]}s'_j$;
\item[ii)] if $(-1)^{k-j}=-1$, then $s'_j<_{[C\prec p]}t_j$;
\item[iii)] $t_k<_{[C\prec p]}s'_k$ are consecutive numbers in the order $<_{[C\prec p]}$.
\end{enumerate}
\end{Definition}
\begin{Proposition}\label{prop:ord} $<_{[C\prec p]}$ is a total order on the entries of $(T_p)_i$, for all $i=1,\dots, n+1-k$.
\end{Proposition}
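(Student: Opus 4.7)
The plan is to fix a single row $(T_p)_i$ of the critical tableau $T_p$ with entries $b_1 < b_2 < \cdots < b_m$ in their natural integer order, and to identify exactly which pair relations from Definition \ref{defconditionstabl} involve only entries of this row. Since each $H_{(s'_j,t_j)}$ contains $|p|$, both $s'_j$ and $t_j$ lie in the same row $\mathfrak{r}_p(t_j)$ of $T_p$. In the braid arrangement the filtration satisfies $\AA_{t-1}\setminus\AA_{t-2}=\{H_{(s,t)}:s<t\}$, and the $\nbc$ condition $t_1<t_2<\cdots<t_k$ together with the partition structure of $T_p$ forces the $t_j$-values lying in row $i$ to be exactly the non-minimal entries $\{b_2,\ldots,b_m\}$. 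Indexing by $h\in\{2,\ldots,m\}$, let $j_h$ be the unique index with $t_{j_h}=b_h$; then $s'_{j_h}\in\{b_1,\ldots,b_{h-1}\}$, so the row contributes precisely $m-1$ pairs $(s'_{j_h},b_h)$, which form a connected graph on $\{b_1,\ldots,b_m\}$, and hence a tree.

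Next I would match the orientations in Definition \ref{defconditionstabl} with a geometric invariant of $C$. By clauses (ii)--(iii) of the definition of $f_k$ in \eqref{df:nablak}, the hyperplane $H_{(s'_j,t_j)}$ separates $C$ from $C_0$ if and only if $(-1)^{k-j}=-1$; in the braid arrangement this translates to $x_{t_j}<x_{s'_j}$ on $C$ exactly when clause (ii) of Definition \ref{defconditionstabl} applies and $x_{s'_j}<x_{t_j}$ on $C$ exactly when clause (i) does. Consequently, every pair relation agrees with the reverse of the coordinate order of $C$ restricted to the row. Since $C$ is a chamber, this coordinate order is already a total order on $\{b_1,\ldots,b_m\}$. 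It therefore suffices to show that the transitive closure of the pair relations generates \emph{all} comparisons between row entries.

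To prove totality I would induct on $h=2,\ldots,m$, proving at each step that $\{b_1,\ldots,b_h\}$ is totally ordered by $<_{[C\prec p]}$. The inductive step reduces to the following key claim: the choice of $s'_{j_h}$ as $\min_{\pol}$ (resp.\ $\max_{\pol}$) of the separating (resp.\ non-separating) hyperplanes in $[H_{j_h}]_p$ that the defining conditions of $f_k$ impose selects precisely the element of $\{b_1,\ldots,b_{h-1}\}$ that is currently $\pol$-extremal, so that attaching the new edge $(s'_{j_h},b_h)$ extends the partial order to a chain on $\{b_1,\ldots,b_h\}$. This is the main obstacle: it requires matching the polar ordering on the rank-one critical faces with supports in $[H_{j_h}]_p$ to the ordering of $C$-coordinates on $\{b_1,\ldots,b_{h-1}\}$, which I would establish using the explicit flag $\{V_l\}$ from Remark \ref{generalext} together with the recursive construction of $T_p$ in \eqref{tabdipi} and the supersolvability-compatible property \eqref{eq:propord} of $\pol$. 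Finally, clause (iii) of Definition \ref{defconditionstabl} anchors the induction in the row containing $t_k$: combined with the wall condition (iv) of $f_k$, the consecutivity requirement forces $s'_k$ and $t_k$ to sit as adjacent extrema, from which the induction propagates inside that row; the remaining rows are handled analogously since the restriction of $f_k$ to each row is governed only by clauses (i)--(ii).
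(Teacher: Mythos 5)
Your reduction of the row to a tree is sound: in the braid arrangement the pairs $(s'_{j_h},t_{j_h})$ with $t_{j_h}$ in a fixed row do form $m-1$ edges on the $m$ entries, with each non-minimal entry occurring exactly once as a $t$-value, and your observation that each individual pair relation reverses the coordinate order of $C$ (equivalently, agrees with the coordinate order of $\op_p(C)$) is correct. But the proof has a genuine gap exactly where you flag ``the main obstacle'': you never prove the key claim that the $\min_{\pol}/\max_{\pol}$ selection in \eqref{df:nablak} attaches each new vertex $b_h$ to the \emph{currently extremal} element of $\{b_1,\dots,b_{h-1}\}$, and without it the argument collapses. A tree of pairwise relations, each consistent with a fixed total order, does not in general have a total transitive closure (two edges pointing away from a common vertex leave the other endpoints incomparable), so totality is precisely equivalent to that unproven claim. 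Worse, the claim appears to be false as formulated: conditions (iv) and (ii)--(iii) of $f_k$ select $H_{(s'_j,t_j)}$ so that $t_j$ ends up \emph{adjacent} to $s'_j$ in the final order, and $s'_j$ need not be an extremum of the chain already built on $\{b_1,\dots,b_{h-1}\}$; a single relation attaching a new vertex to an interior element of a chain does not extend the chain by transitivity. Finally, condition (iii) of Definition \ref{defconditionstabl} supplies consecutivity only for the last pair $(s'_k,t_k)$, so your plan to anchor an induction at that pair and then handle ``the remaining rows analogously'' has nothing to propagate from in the rows not containing $t_k$: clauses (i)--(ii) alone give one comparison per edge and no adjacency information.

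The paper avoids all of this by inducting on $k$ rather than within a row: removing the last hyperplane and invoking Lemma \ref{lem:sottofaccia} and Corollary \ref{correspcelluple} produces the unique $(k-1)$-critical cell $[C'\prec\breve p]$; the tableaux $T_p$ and $T_{\breve p}$ differ in a single row, the orders on all unchanged rows are simply reversed (hence still total), and the changed row is obtained from a total order by inserting $t_k$ in the position adjacent to $s'_k$ dictated by condition (iii). In effect the total order is \emph{defined} recursively, with the consecutivity condition re-applied at every level of the recursion, which is the structural input your transitive-closure argument is missing. If you want to salvage your row-local approach, you would need to prove that the sequence of insertions produced by the recursion always lands at an end of the current chain, and that is not true in general.
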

\begin{proof} 
We will prove the statement using induction on $k$. Suppose $k=1$. Then the statement is obvious as $T_p$ is a tableau with $n-1$ rows of length one and one row of length two with entries $\{s_1,t_1\}$ and, by definition, $t_1<_{[C\prec p]}~s_1$.

Suppose now $k>1$ and let $(H_{(s'_1,t_1)}, \dots, H_{(s'_{k-1},t_{k-1})})$ be the $k-1$-uple obtained removing the last entry of $f_k([C\prec p])=(H_{(s'_1,t_1)}, \dots, H_{(s'_k,t_k)})$. By Lemma \ref{lem:sottofaccia} and Corollary \ref{correspcelluple}, there exists a unique $(k-1)$-critical cell $[C'\prec \breve{p}]$ attached to it. Hence we can consider the tableau $T_{\breve{p}}$ and we know by induction that $<_{[C'\prec \breve{p}]}$ is a total order on the rows of $T_{\breve{p}}$. $T_p \prec T_{\breve{p}}$ differ only by the rows that contains $s'_k$ and $t_k$ as $\mathfrak{r}_{\breve{p}}(s'_k) \ne \mathfrak{r}_{\breve{p}}(t_k)$ while $\mathfrak{r}_{p}(s'_k)=\mathfrak{r}_{p}(t_k)$. As a consequence we just need to prove that $<_{[C\prec p]}$ is a total order on the row $\mathfrak{r}_{p}(s'_k)=\mathfrak{r}_{p}(t_k)$. In all the other rows of $T_p$ the order is given by
$$b<_{[C\prec p]}a \quad \mbox{ if and only if} \quad a<_{[C'\prec \breve{p}]}b.$$ 
Notice that, by tableaux definition,  $\sharp(T_{\breve{p}})_{\mathfrak{r}_{\breve{p}}(t_k)}=1$ and hence 
$(T_{p})_{\mathfrak{r}_p(s'_k)}=(T_{\breve{p}})_{\mathfrak{r}_{\breve{p}}(s'_k)}\cup\{t_k\}$. 
By condition iii) in Definition \ref{defconditionstabl} $t_k<_{[C\prec p]}s'_k$ are consecutive numbers in the order $<_{[C\prec p]}$ then, if  $a\in(T_{\breve{p}})_{\mathfrak{r}_{\breve{p}}(s'_k)}\setminus\{s'_k\}$, $a<_{[C'\prec \breve{p}]}s'_k$ implies $a<_{[C\prec p]}t_k$ and, similarly, $s'_k<_{[C'\prec \breve{p}]}a$ implies $t_k<_{[C\prec p]}a$, i.e. $<_{[C\prec p]}$ is a total order.
\end{proof}
For all $i=1,\dots, n+1-k$, denote by $T_{[C\prec p]}^i$ the column tableau obtained transposing the $i$-th row 
$(T_p)_i$ of the tableau $T_p$ with entries ordered from upper to bottom by $<_{[C\prec p]}$. Define
$$T_{[C\prec p]}:=T_{[C\prec p]}^1\ast\cdots\ast T_{[C\prec p]}^{n+1-k}.$$ 
The following Theorem holds.

\begin{Theorem} The map $\mathfrak{T}\colon\textup{Crit}(\SS(A_n)) \longrightarrow \Tbf^{\mathbf{0}}(A_{n})$ defined by $\mathfrak{T}([C\prec p])=T_{[C\prec p]}$ is a bijection.
\end{Theorem}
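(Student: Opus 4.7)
The plan is to prove the bijectivity of $\mathfrak{T}$ in two moves: a cardinality match reducing the problem to injectivity, followed by injectivity by induction on the codimension $k$ of $p$.

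\textbf{Step 1 (Cardinality).} The set $\Tbf^{\mathbf{0}}(A_n)$ consists of column tableaux of height $n+1$ filled bijectively with $\{1,\ldots,n+1\}$, so $|\Tbf^{\mathbf{0}}(A_n)|=(n+1)!$. By minimality of the Salvetti complex and the classical identification of chambers of the braid arrangement with permutations, $|\textup{Crit}(\SS(A_n))|=\sum_k b_k=|\ch(\AA(A_n))|=(n+1)!$. It therefore suffices to show that $\mathfrak{T}$ is injective.

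\textbf{Step 2 (Identifying the last pair from the column).} The base case $k=0$ is immediate since $\textup{Crit}_0(\SS(A_n))=\{[C_0\prec C_0]\}$. For the inductive step, suppose $[C\prec p],[C'\prec p']\in\textup{Crit}_k(\SS(A_n))$ satisfy $\mathfrak{T}([C\prec p])=\mathfrak{T}([C'\prec p'])$; by Theorem \ref{th:map} it is enough to show $f_k([C\prec p])=f_k([C'\prec p'])$. Write $f_k([C\prec p])=(H_{(s'_1,t_1)},\ldots,H_{(s'_k,t_k)})$. The supersolvable filtration forces $t_1<\cdots<t_k$, and the form (\ref{tabdipi}) of $T_p$ identifies $t_l=j_l$; in particular $t_k=j_k=\max_l t_l$. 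The entries of $T_p$ in rows of length greater than one are precisely $\{s'_1,t_1,\ldots,s'_k,t_k\}\subset\{1,\ldots,t_k\}$, while all integers strictly greater than $t_k$ appear in the terminal block $T_{k+1}$ of (\ref{tabdipi}) as singleton rows at the bottom of $T_p$; hence they occupy the last positions of the column $T_{[C\prec p]}$ in natural order and cannot be the \emph{upper} entry of any adjacent pair $(a,b)$ in $T_{[C\prec p]}$ with $a>b$. Combining this with condition (iii) of Definition \ref{defconditionstabl} (which places $t_k$ directly above $s'_k$ in the column), $(s'_k,t_k)$ is uniquely characterized as the adjacent pair $(a,b)$ in $T_{[C\prec p]}$ with $a>b$ and $a$ maximal, and is therefore read off from the column alone.

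\textbf{Step 3 (Reduction to the sub-cell and conclusion).} Having recovered $(s'_k,t_k)$, I would apply Lemmas \ref{lem:sottofaccia} and \ref{lem:opp} to obtain the $(k-1)$-critical cell $[\op_{\breve{p}}(C.\breve{p})\prec\breve{p}]$ whose $f_{k-1}$-image is $(H_{(s'_1,t_1)},\ldots,H_{(s'_{k-1},t_{k-1})})$, and then explicitly reconstruct $T_{[\op_{\breve{p}}(C.\breve{p})\prec\breve{p}]}$ from $T_{[C\prec p]}$ in three stages: (i) partition $T_{[C\prec p]}$ into blocks matching the rows of $T_p$, whose shape is encoded via (\ref{tabdipi}) and the recursively special ordering of Theorem \ref{ssfol} by the data $(s'_1,t_1),\ldots,(s'_k,t_k)$ recovered so far; (ii) split the row of $T_p$ containing $(s'_k,t_k)$ into a singleton $\{t_k\}$ (inserted at the position dictated by the generic flag $\{V_i\}$ in the construction of $T_{\breve{p}}$) and the remaining entries; (iii) reorder each row of $T_{\breve{p}}$ by the order $<_{[\op_{\breve{p}}(C.\breve{p})\prec\breve{p}]}$, prescribed by Definition \ref{defconditionstabl} applied to $(H_{(s'_1,t_1)},\ldots,H_{(s'_{k-1},t_{k-1})})$. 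The inductive hypothesis then forces $[\op_{\breve{p}}(C.\breve{p})\prec\breve{p}]$ to be uniquely determined, hence the $(k-1)$-uple $(H_{(s'_1,t_1)},\ldots,H_{(s'_{k-1},t_{k-1})})$ too, which combined with the previously recovered $H_{(s'_k,t_k)}$ yields $f_k([C\prec p])=f_k([C'\prec p'])$ and closes the induction.

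\textbf{Main obstacle.} The principal technical difficulty lies in Step 3: the column $T_{[C\prec p]}$ does not display the row boundaries of $T_p$ explicitly, so the shape of $T_p$ must be read off iteratively from the recovered pairs $(s'_l,t_l)$ together with the rigid structure (\ref{tabdipi}) and Theorem \ref{ssfol}, and one must verify that the position into which $\{t_k\}$ is re-inserted when building $T_{\breve{p}}$ is uniquely dictated by the generic flag. Keeping track of how the order $<_{[C\prec p]}$ specializes to $<_{[\op_{\breve{p}}(C.\breve{p})\prec\breve{p}]}$ on the remaining rows is delicate, but Proposition \ref{prop:ord} and the recursive recipe in Definition \ref{defconditionstabl} should be enough to carry out the bookkeeping.
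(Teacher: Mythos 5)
The paper states this theorem without any proof, so there is nothing of the authors' to compare your argument against; I can only assess the proposal on its own terms. Your Steps 1 and 2 are sound. The cardinality count correctly reduces the problem to injectivity, and your observation that $(t_k,s'_k)$ is the unique adjacent descent of the column $T_{[C\prec p]}$ with maximal upper entry is right: condition (iii) of Definition \ref{defconditionstabl} places $t_k$ immediately above $s'_k$, the supersolvable filtration forces $t_1<\cdots<t_k$ with $t_l=j_l$, and every entry larger than $t_k$ lies in the terminal block $T_{k+1}$ of \eqref{tabdipi}, hence occupies the bottom of the column in increasing order and creates no descent. So the last hyperplane $H_{(s'_k,t_k)}$ is indeed read off from the column alone.

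The gap is in Step 3, and you have located it yourself without closing it. To invoke the inductive hypothesis you must show that the column $T_{[\op_{\breve{p}}(C.\breve{p})\prec\breve{p}]}$ is a \emph{function} of the column $T_{[C\prec p]}$. Your reconstruction needs the decomposition of $T_{[C\prec p]}$ into the blocks coming from the rows of $T_p$ — that is, the shape of $T_p$, equivalently the full data $(j_1,i_1),\ldots,(j_k,i_k)$ — together with the restriction of $<_{[C\prec p]}$ to each row. But at the moment of the reduction you have recovered only the single pair $(s'_k,t_k)$; the earlier pairs cannot be extracted by iterating the descent rule, since for $l<k$ with $(-1)^{k-l}=-1$ the pair $s'_l<_{[C\prec p]}t_l$ appears as an ascent in the column and is not even adjacent in general. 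This is a genuine circularity rather than bookkeeping: you would need a lemma of the form ``$T_{[C\prec p]}$ determines $T_p$'' (for instance, that $T_p$ is the $\pol$-minimal critical tableau lying above the chamber encoded by the column — which is essentially the content of the subsequent commutative-diagram theorem and of the paper's closing description of $g^{-1}$, so it cannot be assumed here), or else an explicit rule extracting the whole sequence of pairs from the column before any reduction. As written, Step 3 asserts the key point instead of proving it, so the argument is incomplete.
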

By the Definition \ref{defconditionstabl}, we get that the bijection $\mathfrak{T}$ factorizes through the bijection $f$. That is, there exists a bijection  $$g\colon \nbc(A_n) \longrightarrow \Tbf^{\mathbf{0}}(A_{n})$$ between non broken circuit basis of the Orlik-Solomon algebra $A(A_n)$ and $0$-tableaux such that $\mathfrak{T}= g\circ f$. Moreover, as tableaux in $\Tbf^{\mathbf{0}}(A_{n})$ naturally correspond to permutations in the symmetric group $S_{n+1}$ on one hand and to chambers $C \simeq [C \prec C]$ of the braid arrangement $\AA(A_n)$ on the other hand, if  we consider  the map $\eta$ described in \eqref{eq:eta}, we get the following diagram of maps
\begin{equation} \begin{array}{ccc}\label{eq:maps}
\textup{Crit}(\SS(A_n)) &\xrightarrow{f} &\nbc(A_n)  \\
\eta \downarrow & & g \downarrow \\
\ch(A_n) & \xrightarrow{\varphi} & \Tbf^{\mathbf{0}}(A_{n})\simeq S_{n+1}
\end{array}
\end{equation}
where $\varphi$ is the bijection described in Section \ref{subsec:tab}. The following Theorem holds.

\begin{Theorem} The diagram in \eqref{eq:maps} is a commutative diagram of bijective maps.
\end{Theorem}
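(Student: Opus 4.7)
The plan is to verify the two assertions—bijectivity and commutativity—separately. Bijectivity is essentially already on the table: $f$ is bijective by Corollary~\ref{correspcelluple}, $\eta$ is bijective by the construction of the minimal complex in \cite{salvettisette}, and $\varphi$ is bijective by the tableau description recalled in Section~\ref{subsec:tab}. The map $g$ was introduced above through the factorization $\mathfrak{T}=g\circ f$, where $\mathfrak{T}$ is the bijection asserted in the theorem preceding the present one; since $f$ is bijective, so is $g$.

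For commutativity, observe that $g\circ f=\mathfrak{T}$ by definition, so the claim reduces to proving $T_{[C\prec p]}=T_{\op_p(C)}$ for every critical cell $[C\prec p]$. I would proceed by induction on $k=\codim(p)$. The case $k=0$ is immediate: the only $0$-critical cell is $[C_0\prec C_0]$, and since $\AA_{C_0}=\varnothing$ we have $\op_{C_0}(C_0)=C_0$; moreover each row of $T_{C_0}$ has a single entry, so $T_{[C_0\prec C_0]}=T_{C_0}$ trivially.

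For the inductive step, let $[C\prec p]$ be $k$-critical with $f_k([C\prec p])=(H_{s'_1,t_1},\dots,H_{s'_k,t_k})$, and let $\breve p$ be as in Lemma~\ref{lem:sottofaccia}. From the proof of Theorem~\ref{th:map} the $(k-1)$-critical cell $[\op_{\breve p}(C.\breve p)\prec\breve p]$ has $f_{k-1}$-image $(H_{s'_1,t_1},\dots,H_{s'_{k-1},t_{k-1}})$, so the inductive hypothesis yields $T_{[\op_{\breve p}(C.\breve p)\prec\breve p]}=T_{\op_{\breve p}(\op_{\breve p}(C.\breve p))}=T_{C.\breve p}$. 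I would then compare the column blocks of $T_{[C\prec p]}$ and $T_{\op_p(C)}$ row by row of $T_p$. The tableaux $T_p$ and $T_{\breve p}$ coincide except that the singleton row $\{t_k\}$ of $T_{\breve p}$ is merged with the row $(T_{\breve p})_{\mathfrak{r}_{\breve p}(s'_k)}$ to form a single row of $T_p$. On any unchanged row, Proposition~\ref{prop:ord} tells us that $<_{[C\prec p]}$ is the reverse of $<_{[\op_{\breve p}(C.\breve p)\prec\breve p]}$, which by induction matches the order read off $T_{C.\breve p}$; since $C$ and $C.\breve p$ lie in the same chamber of $\AA_{\breve p}$, they are on the same side of every $H_{a,b}$ with $a,b$ in the same row of $T_{\breve p}$, so that order also coincides with the one read off $T_C$, and reversing it recovers the corresponding block of $T_{\op_p(C)}$, because $\op_p$ reverses within rows of $T_p$.

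For the merged row, the wall condition (iv) of \eqref{df:nablak} forces $H_{s'_k,t_k}$ to be a wall of $C$ with $(C\mid C_0)_{H_{s'_k,t_k}}=1$, whence $\mathfrak{r}_C(t_k)=\mathfrak{r}_C(s'_k)+1$. The block-reversal realizing $\op_p$ therefore places $t_k$ immediately above $s'_k$ in $T_{\op_p(C)}$, which is precisely the consecutive placement $t_k<_{[C\prec p]}s'_k$ prescribed by clause (iii) of Definition~\ref{defconditionstabl}; the remaining entries of $(T_{\breve p})_{\mathfrak{r}_{\breve p}(s'_k)}$ are arranged exactly as in the unchanged-row case. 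The main obstacle I anticipate lies in this merged-row step: one must verify carefully that the reflection $\op_p$, which reverses the whole merged block of $T_C$, sends $s'_k$ and $t_k$ to the two positions matching Definition~\ref{defconditionstabl}. This is a routine but delicate tracking of walls and separating hyperplanes, and is where the supersolvable structure together with the recursive nature of the ordering $\pol$ really come into play.
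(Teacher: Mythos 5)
Your proposal is correct, but note that the paper itself offers no proof of this theorem: it is asserted immediately after the diagram \eqref{eq:maps}, with only the remark that $\mathfrak{T}$ factorizes through $f$ by Definition \ref{defconditionstabl}. What you have written is therefore a genuine filling-in of an omission rather than a variant of an existing argument. Your route is the natural one and is consistent with how the paper proves its neighbouring results: bijectivity of $f$, $\eta$, $\varphi$ is quoted, $g:=\mathfrak{T}\circ f^{-1}$ inherits bijectivity from the (also unproved) preceding theorem on $\mathfrak{T}$, and commutativity reduces to $T_{[C\prec p]}=T_{\op_p(C)}$, which you establish by induction on $\codim(p)$ using exactly the ingredients the paper assembles for Theorem \ref{th:map} and Proposition \ref{prop:ord} (the face $\breve p$ of Lemma \ref{lem:sottofaccia}, the identity $\op_{\breve p}\circ\op_{\breve p}=\mathrm{id}$, the reversal of the order on unchanged rows, and the wall condition (iv) forcing $\mathfrak{r}_C(t_k)=\mathfrak{r}_C(s'_k)+1$ so that block reversal places $t_k$ immediately above $s'_k$). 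The one point you should make fully explicit is that, since $p$ lies in the closure of $C$, the entries of each row of $T_p$ occupy consecutive positions of the column tableau $T_C$, and that $\op_p$ reverses each such block while preserving the order between blocks; this is what legitimises the row-by-row comparison and, in particular, the merged-row step you rightly single out as the delicate one.
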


The bijection $g$ retrieves, for Coxeter group $A_n$, the map constructed by Barcelo and Goupil in \cite{BarceloGupil}. They proved the existence of a bijection between non broken circuit basis of the Orlik-Solomon algebra and elements of the group for all real reflection groups. However, they only proved the existence of the inverse map $g^{-1}$ by induction. In our construction, it is possible to get the inverse map by direct computation as follows.

Let $\mathcal{T}(A_n)$ be the set of all critical tableaux computed as in equation (\ref{tabdipi}).
Given a permutation $w \in A_n$ and the associated tableau $T_w$, then $T_w=T_{C'}$ for a chamber $C'=\op_p(C)=\eta([C \prec p]) $ where $p$ is the smallest critical face in the order $\pol$ such that $C'\prec p$ and hence
$$T_p=\min_{\pol}\{T \in \mathcal{T}(A_n) \mid T_{C'}\prec T\}.$$

Then, given $T_{C'}$ and $T_p$ it is possible to retrieve the \textit{nbc}-uple of hyperplanes associated to $w$ via Definition \ref{defconditionstabl}.

An interesting question is whether this construction can be extended to other reflection groups.

\bigskip

\bigskip


\noindent
\textbf{Acknowledgments:} 
During the preparation of this paper, the second author was partially supported by JSPS Postdoctoral Fellowship For Foreign Researchers, the Grant-in-Aid (No. 23224001 (S)) for Scientific Research, JSPS.

\bibliography{bibliothesis}{}

\begin{thebibliography}{10}

\bibitem{BarceloGupil}
H.~Barcelo and A.~Goupil.
\newblock Non-broken circuits of reflection groups and factorization in
  ${D}_n$.
\newblock {\em Israel Journal of Mathematics}, 91(1-3):285--306, 1995.

\bibitem{bjorner1991broken}
A.~Bj{\"o}rner and G.~M. Ziegler.
\newblock Broken circuit complexes: Factorizations and generalizations.
\newblock {\em Journal of Combinatorial Theory, Series B}, 51(1):96--126, 1991.

\bibitem{budur2009first}
N.~Budur, A.~Dimca, and M.~Saito.
\newblock First {M}ilnor cohomology of hyperplane arrangements.
\newblock {\em 'Topology of Algebraic Varieties and Singularities',
  Contemporary Mathematics}, 538:279--292, 2011.

\bibitem{cohen1995milnor}
D.~Cohen and A.~I. Suciu.
\newblock On {M}ilnor fibrations of arrangements.
\newblock {\em Journal of the London Mathematical Society}, 51(1):105--119,
  1995.

\bibitem{DeProSal}
C.~De~Concini, C.~Procesi, and M.~Salvetti.
\newblock Arithmetic properties of the cohomology of braid groups.
\newblock {\em Topology}, 40:739--751, 2001.

\bibitem{Del2008}
E.~Delucchi.
\newblock Shelling-type orderings of regular cw-complexes and acyclic matchings
  of the salvetti complex.
\newblock {\em Int. Math. Res. Not.}, 2008:39pp, doi:10.1093/imrn/rnm167, 2008.

\bibitem{Delucchi2010combinat}
E.~Delucchi and S.~Settepanella.
\newblock Combinatorial polar orderings and recursively orderable arrangements.
\newblock {\em Advances in Applied Mathematics}, 44(2):124--144, 2010.

\bibitem{denham2012multinets}
G.~Denham and A.~I. Suciu.
\newblock Multinets, parallel connections, and {M}ilnor fibrations of
  arrangements.
\newblock {\em arXiv preprint arXiv:1209.3414}, 2012.

\bibitem{dimca2009admissible}
A.~Dimca.
\newblock On admissible rank one local systems.
\newblock {\em Journal of Algebra}, 321(11):3145--3157, 2009.

\bibitem{dimca2003hypersurface}
A.~Dimca and S.~Papadima.
\newblock Hypersurface complements, {M}ilnor fibers and higher homotopy groups
  of arrangments.
\newblock {\em The Annals of Mathematics}, 158(2):473--507, 2003.

\bibitem{FaNe}
E.~Fadell and L.~Neuwirth.
\newblock Configuration spaces.
\newblock {\em Mathematica Scandinavica}, 10:111--118, 1962.

\bibitem{FaRa}
M.~Falk and R.~Randell.
\newblock The lower central series of a fiber-type arrangement.
\newblock {\em Inventiones mathematicae}, 82(1):77--88, 1985.

\bibitem{gaiffi2009morse}
G.~Gaiffi and M.~Salvetti.
\newblock The morse complex of a line arrangement.
\newblock {\em Journal of Algebra}, 321(1):316--337, 2009.

\bibitem{GioLa}
E.~Gioan and M.~Las~Vergnas.
\newblock The active bijection in graphs, hyperplane arrangements, and oriented
  matroids - 1 - the fully optimal basis of a bounded region.
\newblock {\em European Journal of Combinatorics}, 30:1868--1886, 2009.

\bibitem{libgober2010combinatorial}
A.~Libgober.
\newblock On combinatorial invariance of the cohomology of {M}ilnor fiber of
  arrangements and {C}atalan equation over function field.
\newblock {\em arXiv preprint arXiv:1011.0191}, 2010.

\bibitem{libgober2000cohomology}
A.~Libgober and S.~Yuzvinsky.
\newblock Cohomology of the {O}rlik-{S}olomon algebras and local systems.
\newblock {\em Compositio mathematica}, 121(03):337--361, 2000.

\bibitem{ontheadmisscertlocsyst}
S.~Nazir, M.~Torielli, and M.~Yoshinaga.
\newblock On the admissibility of certain local systems.
\newblock {\em Topology and its Applications}, 178:288--299, 2014.

\bibitem{orlik1980combinatorics}
P.~Orlik and L.~Solomon.
\newblock Combinatorics and topology of complements of hyperplanes.
\newblock {\em Inventiones mathematicae}, 56(2):167--189, 1980.

\bibitem{orlterao}
P.~Orlik and H.~Terao.
\newblock {\em Arrangements of hyperplanes}, volume 300 of {\em Grundlehren der
  Mathematischen Wissenschaften [Fundamental Principles of Mathematical
  Sciences]}.
\newblock Springer-Verlag, Berlin, 1992.

\bibitem{randell}
R.~Randell.
\newblock Morse theory, {M}ilnor fibers and minimality of hyperplane
  arrangements.
\newblock {\em Proceedings of the American Mathematical Society},
  130(9):2737--2743, 2002.

\bibitem{salvetti}
M.~Salvetti.
\newblock Topology of the complement of real hyperplanes in $\mathbb{C}^n$.
\newblock {\em Inventiones mathematicae}, 88(3):603--618, 1987.

\bibitem{salvettisette}
M.~Salvetti and S.~Settepanella.
\newblock Combinatorial {M}orse theory and minimality of hyperplane
  arrangements.
\newblock {\em Geom. Topol.}, 11:1733--1766, 2007.

\bibitem{schechtman1994local}
V.~Schechtman, H.~Terao, and A.~Varchenko.
\newblock Local systems over complements of hyperplanes and the {K}ac-{K}azhdan
  conditions for singular vectors.
\newblock {\em J. Pure Appl. Alg.}, 100(1--3):93--102, 1995.

\bibitem{settepanella2009cohomology}
S.~Settepanella.
\newblock A stability-like theorem for cohomology of pure braid groups of the
  series a, b and d,.
\newblock {\em Topology and its Applications}, 156(139):37--47, 2004.

\bibitem{stanley1}
R.~P. Stanley.
\newblock Supersolvable lattices.
\newblock {\em Algebra Universalis}, 2(1):197--217, 1972.

\bibitem{Suciu:2013fk}
A.~I. Suciu.
\newblock Hyperplane arrangements and {M}ilnor fibrations.
\newblock {\em Ann. Fac. Sci. Toulouse Math.}, 6(2):417--481, 2014.

\bibitem{terao1}
H.~Terao.
\newblock Modular elements of lattices and topological fibration.
\newblock {\em Advances in Mathematics}, 62(2):135--154, 1986.

\bibitem{terao1992factorizations}
H.~Terao.
\newblock Factorizations of the {O}rlik-{S}olomon algebras.
\newblock {\em Advances in Mathematics}, 91(1):45--53, 1992.

\bibitem{yoshinaga2007hyperplane}
M.~Yoshinaga.
\newblock Hyperplane arrangements and lefschetz's hyperplane section theorem.
\newblock {\em Kodai Mathematical Journal}, 30(2):157--194, 2007.

\bibitem{yoshinaga2009chamber}
M.~Yoshinaga.
\newblock The chamber basis of the {O}rlik--{S}olomon algebra and {A}omoto
  complex.
\newblock {\em Arkiv f{\"o}r matematik}, 47(2):393--407, 2009.

\bibitem{yoshinaga2013milnor}
M.~Yoshinaga.
\newblock Milnor fibers of real line arrangements.
\newblock {\em Journal of Singularities}, 7:220--237, 2013.

\bibitem{Zaslavsky1975Memoir}
T.~Zaslavsky.
\newblock Facing up to arrangements: Face-count formulas for partitions of
  space by hyperplanes.
\newblock {\em Memoirs of the American Mathematical Society}, 1:no. 154, 1975.

\end{thebibliography}
\bibliographystyle{plain}
\end{document}